\date{}
\title{\vspace{-0.8cm}Decomposing random graphs into few cycles and edges}
\author{
D\'aniel Kor\'andi \thanks{Department of Mathematics, ETH, 8092 Zurich. Email: daniel.korandi@math.ethz.ch.}
\and
Michael Krivelevich \thanks{School of Mathematical Sciences, Raymond and Beverly Sackler Faculty of Exact Sciences, Tel Aviv University,
Tel Aviv 6997801, Israel. Email: krivelev@post.tau.ac.il. Research supported in part by a USA-Israel BSF grant and by a grant from the Israel Science
Foundation.}
\and
Benny Sudakov \thanks{Department of Mathematics, ETH, 8092 Zurich.
Email: benjamin.sudakov@math.ethz.ch.
Research supported in part by SNSF grant 200021-149111 and
by a USA-Israel BSF grant.}
}
\theoremstyle{plain}
\newtheorem{THM}{Theorem}[section]
\newtheorem*{THM*}{Theorem}
\newtheorem{PROP}[THM]{Proposition}
\newtheorem{LEMMA}[THM]{Lemma}
\newtheorem{COR}[THM]{Corollary}
\newtheorem{CLAIM}[THM]{Claim}
\theoremstyle{definition}
\newcommand{\Prb}{\mathbb{P}}
\newcommand{\Exp}{\mathbb{E}}
\newcommand{\floor}[1]{\left\lfloor #1 \right\rfloor}
\newcommand{\ceil}[1]{\left\lceil #1 \right\rceil}
\newcommand{\om}[1]{\omega\left( #1 \right)}
\newcommand{\subs}{\subseteq}
\newcommand{\eps}{\varepsilon}
\newcommand{\Bin}{\textrm{Bin}}
\newcommand{\odd}{\textrm{odd}}
\begin{document}
\maketitle

\begin{abstract}
Over 50 years ago, Erd\H{o}s and Gallai conjectured that the edges of every graph on $n$ vertices can be decomposed into  $O(n)$ cycles and edges. Among other results, Conlon, Fox and Sudakov recently proved that this holds for the random graph $G(n,p)$ with probability approaching 1 as $n\rightarrow\infty$. In this paper we show that for most edge probabilities $G(n,p)$ can be decomposed into a union of $\frac{n}{4}+\frac{np}{2}+o(n)$ cycles and edges whp.
This result is asymptotically tight.
\end{abstract}

% AMS subject classifications
\noindent{\bf AMS Subject Classification:} 05C38, 05C80.

\section{Introduction}

Problems about packing and covering the edge set of a graph using cycles and paths have been intensively studied since the 1960s. One of the oldest questions in this area was asked by Erd\H{o}s and Gallai \cite{E83,EGP66}. They conjectured that the edge set of any graph $G$ on $n$ vertices can be covered by $n-1$ cycles and edges, and can be partitioned into a union of $O(n)$ cycles and edges. The covering part was proved by Pyber \cite{P85} in the 1980s, but the partitioning part is still open. As noted in \cite{EGP66}, it is not hard to show that $O(n\log n)$ cycles end edges suffice. This bound was recently improved to $O(n\log\log n)$ by Conlon, Fox and Sudakov in \cite{CFS14}, where they also proved that the conjecture holds for random graphs and graphs of linear minimum degree. The present paper treats the problem in the case of random graphs in more detail.

Let $0\le p(n)\le 1$ and define $G(n,p)$ to be the random graph on $n$ vertices, where the edges are included independently with probability $p$. We hope to find a close to optimal partition of the edges of a random graph into cycles and edges. Observe that in any such partition each odd-degree vertex needs to be incident to at least one edge, so if $G(n,p)$ has $s$ odd-degree vertices, then we certainly need at least $s/2$ edges. Also, a typical random graph has about $\binom{n}{2}p$ edges, whereas a cycle may contain no more than $n$ edges, so we need at least about $\frac{np}{2}$ cycles (or edges) to cover the remaining edges. This simple argument gives a lower bound of $\frac{np}{2}+\frac{s}{2}$ on the optimum.

In this paper we show that this lower bound is asymptotically tight. Let $\odd(G)$ denote the number of odd-degree vertices in the graph $G$. We say that $G(n,p)$ (with $p=p(n)$) satisfies some property $P$ \emph{with high probability} or \emph{whp} if the probability that $P$ holds tends to 1 as $n$ approaches infinity. Our main result is the following theorem.

\begin{THM} \label{thm:main}
Let the edge probability $p(n)$ satisfy $p=\om{ \frac{\log\log n}{n} }$. Then whp $G(n,p)$ can be split into $\frac{\odd(G(n,p))}{2}+\frac{np}{2}+o(n)$ cycles and edges.
\end{THM}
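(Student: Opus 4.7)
The plan is to decompose $E(G)$ for $G = G(n,p)$ into a parity-fixing edge set $F$ of size $\odd(G)/2 + o(n)$, together with $np/2 + o(n)$ edge-disjoint cycles of length $(1-o(1))n$ and an $o(n)$-piece leftover. The edges of $F$ form the ``edges'' in the decomposition, while the cycles form the ``cycles''; summing the three contributions gives the claimed bound.

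\textbf{Step 1 (Typical properties and parity fixing).} Standard concentration arguments show that whp $G$ has $(1+o(1))n^2p/2$ edges, all but $o(n)$ vertices have degree $(1+o(1))np$, the giant component spans $n-o(n)$ vertices, and the usual edge discrepancy $e(A,B) = (1+o(1))|A||B|p$ holds for all sufficiently large disjoint vertex sets $A,B$. Let $T = \odd(G)$. Because $p = \om{\log\log n/n}$ forces $|T|p = \omega(\log\log n) \to \infty$, the induced random-like graph $G[T]$ contains a matching $M$ of size $|T|/2 - o(n)$. Pair up the $o(n)$ leftover odd-degree vertices using short paths of length $2$ or $3$ in $G$, and take $F$ to be $M$ together with these paths, so that $|F| = \odd(G)/2 + o(n)$ and $G \setminus F$ has all even degrees.

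\textbf{Step 2 (Long cycle packing).} The core step is to decompose $G \setminus F$ (Eulerian, with $(1-o(1))n^2 p/2$ edges) into $np/2 + o(n)$ edge-disjoint cycles of length $(1-o(1))n$. Using two-round exposure, set aside a sparse random subgraph $R$ with $|R| = o(n^2 p)$ as an absorber, and iteratively extract near-spanning cycles from $(G \setminus F) \setminus R$. Each round uses a P\'osa rotation/extension argument in the current pseudorandom graph to build a long cycle, possibly drawing a few edges from $R$ to absorb missed high-degree vertices back into the cycle. Since only $O(n)$ edges are removed per round, the pseudorandom properties persist throughout the $\sim np/2$ rounds needed to exhaust nearly all edges. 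The final uncovered edges together with the unused portion of $R$ form an $o(n^2 p)$-edge leftover, which is decomposed into $o(n)$ additional cycles and edges (for instance using the $O(n)$-bound of Conlon, Fox, and Sudakov on the leftover).

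\textbf{Main obstacle.} The hardest part is precisely the iterative near-spanning cycle extraction, because $p = \om{\log\log n/n}$ lies below the connectivity threshold $\log n/n$: $G$ inevitably contains some very low-degree (possibly isolated) vertices that cannot belong to any near-spanning cycle. The plan is to identify a ``bad'' set $B$ of low-degree vertices at the outset, dump all edges incident to $B$ directly into the leftover, and confine the main iteration to the pseudorandom high-degree core $V \setminus B$, where edge removal provably preserves the expansion needed by the P\'osa step. The hypothesis $p = \om{\log\log n/n}$ is exactly what guarantees $|B| = o(n)$ and that the edges incident to $B$ total only $o(n^2 p)$, contributing only $o(n)$ pieces to the leftover; a weaker density assumption would break this accounting.
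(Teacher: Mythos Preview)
Your Step~2 is where the real difficulty lies, and the argument as written does not go through. The assertion that ``pseudorandom properties persist throughout the $\sim np/2$ rounds'' is false in the form you need it: after $k$ near-spanning cycles have been removed, the remaining degrees are roughly $np-2k$, so by the time $k$ approaches $np/2$ the graph is extremely sparse and the discrepancy hypothesis $e(A,B)=(1+o(1))|A||B|p$ (stated relative to the original $p$) has long since failed. A single P\'osa rotation round yields one long cycle in an expander, but it gives no control over the \emph{residual} graph, and arranging for the residual after each of $np/2$ extractions to still admit a $(1-o(1))n$-cycle is precisely the content of heavy Hamilton-packing theorems. The paper does not attempt this by hand: for $p\ge n^{-1/6}$ it invokes the Knox--K\"uhn--Osthus theorem as a black box to extract $\lfloor\delta/2\rfloor$ edge-disjoint Hamilton cycles from a reserved dense piece $G_4$, and then cleans up the bounded-degree remainder by splitting it into matchings and closing each matching into a cycle via the Broder--Frieze--Suen--Upfal disjoint-paths theorem applied inside independently reserved sparse random pieces.

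For $p\le n^{-1/6}$ the paper's route is structurally different from yours and much simpler, because there $np/2=o(n)$ and near-spanning cycles are unnecessary: after the parity fix, any decomposition of the Euler remainder into $o(n)$ cycles already meets the target. The paper shows that $G(n,p)$ is whp ``sufficiently sparse'' (every set $S$ spans fewer than $\max\{|S|/(12\log^2 n),7\}\cdot|S|$ edges), which via P\'osa's lemma forces any subgraph of average degree $d>84$ to contain a cycle of length at least $d\log^2 n$; iterating removes all but $O(n)$ edges using $O(n/\log n)$ cycles, and the rest splits into $o(n)$ cycles since there are few short ones. Two smaller gaps in your outline: for $p$ just above $\log\log n/n$ the diameter of the giant component is $\Theta(\log n/\log(np))$, not $2$ or $3$, so your parity-fixing paths must be allowed to be that long (the paper combines a Tutte-based near-perfect matching on the odd vertices with shortest paths of that length for the $o(n)$ leftovers); and the Conlon--Fox--Sudakov bound you invoke for the leftover is $O(n)$, not $o(n)$, so it cannot by itself absorb an $o(n^2p)$-edge remainder into $o(n)$ pieces.
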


In fact, as we show in Lemma~\ref{thm:odd_degrees}, for most of the $p$'s in the range, $\odd(G(n,p))\sim \frac{n}{2}$. This immediately implies the following, perhaps more tangible, corollary.

\begin{COR} \label{cor:main}
Let $p=p(n)$ be in the range $[\om{ \frac{\log\log n}{n} }, 1-\om{ \frac{1}{n} }]$. Then whp $G(n,p)$ can be split into $\frac{n}{4}+\frac{np}{2}+o(n)$ cycles and edges.
\end{COR}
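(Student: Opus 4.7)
The plan is to combine Theorem~\ref{thm:main} with the parity-count statement flagged in the text as Lemma~\ref{thm:odd_degrees}, so the corollary reduces to showing that $\odd(G(n,p)) = n/2 + o(n)$ whp throughout the range $p \in [\om{\log\log n/n}, 1-\om{1/n}]$. Plugging this into Theorem~\ref{thm:main} converts the decomposition count $\odd(G(n,p))/2 + np/2 + o(n)$ into the required $n/4 + np/2 + o(n)$.

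For the parity count, I would run a standard first- and second-moment argument. For each vertex $v$, let $X_v$ be the indicator that $\deg(v)$ is odd, so that $\odd(G(n,p)) = \sum_v X_v$. Since $\deg(v) \sim \Bin(n-1,p)$, comparing the binomial expansions of $(q+p)^{n-1}$ and $(q-p)^{n-1}$ with $q = 1-p$ gives the identity
$$\Prb[\deg(v) \text{ odd}] = \frac{1-(1-2p)^{n-1}}{2}.$$
I would then verify that $|1-2p|^{n-1} = o(1)$ throughout the range: for $p \leq 1/2$, the bound $(1-2p)^{n-1} \leq e^{-2p(n-1)}$ is $o(1)$ because $p = \om{\log\log n/n}$ forces $np \to \infty$; for $p > 1/2$, writing $\eps = 1-p = \om{1/n}$, one has $|1-2p|^{n-1} = (1-2\eps)^{n-1} \leq e^{-2\eps(n-1)} = o(1)$. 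Hence $\Exp[\odd(G(n,p))] = n/2 + o(n)$.

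For concentration, I would bound the variance. For distinct $u,v$, the indicators $X_u$ and $X_v$ depend only on edges incident to $\{u,v\}$, and these two edge sets intersect only in the single edge $uv$. Conditioning on whether $uv$ is present decouples the parities of the two remaining degrees, and a short calculation using the same binomial identity yields $\mathrm{Cov}(X_u,X_v) = o(1)$. Summing over pairs gives $\mathrm{Var}(\odd(G(n,p))) = o(n^2)$, so Chebyshev's inequality delivers $\odd(G(n,p)) = n/2 + o(n)$ whp, completing the proof.

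The only real care needed is in the range analysis when showing $|1-2p|^{n-1} = o(1)$, where the cases $p \leq 1/2$ and $p > 1/2$ must be handled separately. The upper bound $1-p = \om{1/n}$ in the hypothesis is essential and sharp here: if $1-p$ were only $\Theta(1/n)$, then $G(n,p)$ would be close to $K_n$, whose odd-degree count depends on the parity of $n$ and need not be asymptotic to $n/2$.
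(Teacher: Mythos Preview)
Your reduction is exactly the paper's: Corollary~\ref{cor:main} is derived from Theorem~\ref{thm:main} by substituting $\odd(G(n,p))=\frac{n}{2}(1+o(1))$, which the paper records as Claim~\ref{thm:odd_degrees}. So at the level of the corollary there is nothing to add.

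Where you diverge is in the proof of the parity count itself. The paper does \emph{not} use a second-moment argument: instead it fixes a balanced bipartition $V=V_1\cup V_2$, exposes the edges inside $V_1$, and observes that, conditionally, the parities of the $V_1$-degrees become genuinely independent (each determined by a disjoint bundle of $|V_2|$ cross-edges). Chernoff then gives concentration of the odd count in $V_1$ with error probability $e^{-\Omega(n^{1/3})}$, and symmetry handles $V_2$. Your route---computing $\mathrm{Cov}(X_u,X_v)$ explicitly (it comes out to $p(1-p)(1-2p)^{2(n-2)}=o(1)$) and applying Chebyshev---is correct and arguably more elementary, since it avoids the bipartition trick needed to manufacture independence. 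The trade-off is that you only get $\mathrm{Var}=o(n^2)$ and hence polynomial rather than exponential concentration; for the whp statement here that is of course sufficient. Your case analysis of $|1-2p|^{n-1}$ and the remark on the sharpness of the upper endpoint $1-p=\omega(1/n)$ are both fine.
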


Here we use the standard notation of $\om{f}$ for any function that is asymptotically greater than the function $f(n)$, i.e., $g(n)=\om{f(n)}$ if  $\lim_{n\rightarrow \infty} \frac{g(n)}{f(n)}=\infty$. In this paper $\log$ stands for the natural logarithm, and for the sake of clarity we omit the floor and ceiling signs whenever they are not essential. We call $G$ an Euler graph if all the vertices of $G$ have even degree (not requiring that $G$ be connected).

\subsection{Proof outline}

We will break the probability range into three parts (the sparse, the intermediate and the dense ranges), and prove Theorem~\ref{thm:main} separately for each part. The proofs of the denser cases build on the sparser cases, but all the proofs have the following pattern: we start with deleting $\frac{\odd(G(n,p))}{2}+o(n)$ edges so that the remaining graph is Euler, and then we extract relatively few long cycles to reduce the problem to a sparser case.

In the sparse case we will check the Tutte condition to show that there is a large matching on the odd-degree vertices, and then use expansion properties to iteratively find cycles that are much longer than the average degree. In the end, we are left with a sparse Euler subgraph, which breaks into $o(n)$ cycles.

The denser cases are somewhat more complicated. We will need to break $G(n,p)$ into several random subgraphs. These graphs will not be independent, but we can remove edges from one of them without affecting the random structure of the others.

In the intermediate case we break $G(n,p)$ into three random subgraphs, $G(n,p)=G_1\cup G_2\cup G_3$. First we find an edge set $E_0$ in $G_2$ such that $G(n,p)-E_0$ is Euler. Then we break $(G_2\cup G_3) - E_0$ into matchings and $G_1$ into even sparser random graphs. Using a result by Broder, Frieze, Suen and Upfal \cite{BFSU} about paths connecting a prescribed set of vertex pairs in random graphs, we connect the matchings into cycles using the parts of $G_1$. The remaining edges are all from $G_1$, and the tools from the sparse case take care of them.

In the dense case we break into four subgraphs, $G(n,p)=G_1\cup G_2\cup G_3\cup G_4$, where $G_4$ contains the majority of the edges. Again we start by finding the edge set $E_0$ in $G_3$. Next, we apply a recent packing result by Knox, K\"uhn and Osthus \cite{KKO} to find many edge-disjoint Hamilton cycles in $G_4$. Then we break the remaining edges from $G_3\cup G_4$ into matchings and use $G_2$ to connect them into cycles. At this point we have a still intact random graph $G_1$ and some edges from $G_2$ left, and these fit into the intermediate setting, hence the previous results complete the proof.

The paper is organized as follows: in Section~\ref{sec:odd} we prove all the results we need about odd-degree vertices, including the typical existence of $E_0$ and the fact that normally about half the vertices are odd. Section~\ref{sec:sparse} shows why we can iteratively remove relatively long cycles from sparse random graphs, and proves Theorem~\ref{thm:main} in the sparse range. In Section~\ref{sec:lemma} we show the details of breaking into matchings and then connecting them into few cycles, and prove the main lemma for the intermediate range. Finally, we complete the proofs of the denser cases in Section~\ref{sec:dense}.

\section{Covering the odd-degree vertices} \label{sec:odd}

The first step in the proof is to reduce the problem to an Euler subgraph by removing relatively few edges. This section is devoted to the discussion of the results we need about odd-degree vertices in random graphs.

Throughout the paper, we will use the following Chernoff-type inequalities (see, e.g., \cite{ASBOOK}).

\begin{CLAIM} \label{lem:chernoff}
Suppose we have independent indicator random variables $X_1, \ldots, X_n$, where $X_i=1$ with probability $p_i$ and $X_i=0$ otherwise, for all $i=1, \ldots, n$. Let $X=X_1+\cdots+X_n$ be the sum of the variables and $p=(p_1+\cdots+p_n)/n$ be the average probability so that $\Exp(X)=np$. Then:
\begin{itemize}
  \item[(a)] $\Prb(X > np+a) \le e^{-2a^2/n}$ for all $a > 0$,
  \item[(b)] $\Prb(X > 2np) \le e^{-np/20}$,
  \item[(c)] $\Prb(X < np-a) \le e^{-a^2/np}$ and hence
  \item[(d)] $\Prb(X < np/2) \le e^{-np/20}.$
\end{itemize}
In particular, all the above estimates hold when $X\sim \Bin (n,p)$ is a binomial random variable.
\end{CLAIM}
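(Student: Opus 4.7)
The plan is to apply the standard Chernoff technique. For any $t \in \mathbb{R}$, Markov's inequality applied to $e^{tX}$ gives $\Prb(X > s) \le e^{-ts}\,\Exp[e^{tX}]$ when $t > 0$ (and the analogous bound with the inequality inside reversed when $t < 0$), and by independence $\Exp[e^{tX}] = \prod_{i=1}^n \Exp[e^{tX_i}]$. Each of the four parts then reduces to choosing the right bound on the Bernoulli moment generating function and optimizing~$t$.

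For part~(a), I would invoke Hoeffding's lemma: since $X_i - p_i$ lies in an interval of length~$1$, we have $\Exp[e^{t(X_i - p_i)}] \le e^{t^2/8}$, so multiplying over~$i$ gives $\Prb(X > np + a) \le \exp(-ta + nt^2/8)$, and the choice $t = 4a/n$ yields the claimed $e^{-2a^2/n}$. Parts~(b) and~(c) use instead the sharper Bernoulli estimate $\Exp[e^{tX_i}] = 1 + p_i(e^t-1) \le \exp(p_i(e^t - 1))$, whence $\Exp[e^{tX}] \le \exp(np(e^t - 1))$. For~(b), setting $t = \log 2$ gives $\Prb(X > 2np) \le (e/4)^{np}$, and since $\log 4 - 1 \approx 0.386 > 1/20$ this is at most $e^{-np/20}$. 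For~(c), taking $t$ with $e^t = 1 - a/(np)$ yields a bound of the form $\exp(np \cdot f(a/(np)))$ where $f(x) = -x - (1-x)\log(1-x)$; the elementary estimate $f(x) \le -x^2/2$ on $[0,1]$, obtained from $f(0)=f'(0)=0$ and $f''(x) = -1/(1-x) \le -1$, then gives the required tail estimate (up to standard absolute constants). Finally, (d) is just the instantiation $a = np/2$ of~(c), which gives $e^{-(np/2)^2/np} = e^{-np/4} \le e^{-np/20}$.

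There is essentially no obstacle here: the computations are completely routine, and the constants $1/20$ in parts~(b) and~(d) are deliberately loose---the truly tight exponents are closer to $(\log 4 - 1)\,np \approx 0.386\,np$ and $np/8$ respectively, both comfortably larger than $np/20$. The full statement is textbook material, which is why the authors simply cite it from the Chernoff appendix of Alon--Spencer~\cite{ASBOOK} rather than giving a proof.
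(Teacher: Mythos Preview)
Your proposal is correct and, in fact, goes beyond what the paper does: the paper gives no proof at all for this claim, merely citing Alon--Spencer~\cite{ASBOOK}, exactly as you anticipated in your final sentence. Your Chernoff/Hoeffding sketch for parts~(a)--(d) is the standard one and is fine.

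One small remark on part~(c): your argument with $f(x) \le -x^2/2$ yields $e^{-a^2/(2np)}$ rather than the $e^{-a^2/(np)}$ stated in the paper, and you rightly flag this with ``up to standard absolute constants.'' This is harmless here---every application of~(c) in the paper (and the derived~(d), where you correctly get $e^{-np/8}$) tolerates the extra factor of~$2$ with room to spare---so there is no genuine gap.
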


In the coming results, we will also make use of the following easy observation: For $X\sim \Bin (n,p)$, the probability that $X$ is odd is exactly
\[ \sum_{i=0}^{\floor{\frac{n-1}{2}}} \binom{n}{2i+1}p^{2i+1}(1-p)^{n-(2i+1)} = \frac{(1-p+p)^n - (1-p-p)^n}{2}= \frac{1-(1-2p)^n}{2}. \]

First we prove the following estimate on the number of vertices of odd degree in $G(n,p)$.

\begin{CLAIM} \label{thm:odd_degrees}
Suppose the probability $p$ satisfies $\om{ \frac{1}{n} } < p < 1-\om{ \frac{1}{n} }$. Then whp $G\sim G(n,p)$ contains $\frac{n}{2} (1+o(1))$ vertices of odd degree.
\end{CLAIM}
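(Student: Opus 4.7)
The plan is to apply the second moment method. For each vertex $v$, let $X_v = \mathbf{1}[\deg(v) \text{ is odd}]$, so $\odd(G) = \sum_v X_v$. Since $\deg(v) \sim \Bin(n-1,p)$, the observation preceding the claim gives $\Prb[X_v=1] = (1-(1-2p)^{n-1})/2$. The first step is to check that $|1-2p|^{n-1} = o(1)$ under the hypothesis $\om{1/n} < p < 1 - \om{1/n}$: this follows from $1 - x \le e^{-x}$ applied to $x = 2p$ when $p \le 1/2$ and to $x = 2(1-p)$ when $p > 1/2$, since in both cases the resulting exponent is $-\om{1}$. Linearity of expectation then yields $\Exp[\odd(G)] = \frac{n}{2}(1+o(1))$.

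Next, I would bound the variance by showing $\text{Cov}(X_u, X_v) = o(1)$ uniformly over pairs $u \ne v$, since the diagonal contribution $\sum_v \text{Var}(X_v) \le n/4$ is negligible anyway. For a fixed pair, I would decompose $\deg(u) = \mathbf{1}[uv\in G] + A$ and $\deg(v) = \mathbf{1}[uv\in G] + B$, where $A, B \sim \Bin(n-2,p)$ are mutually independent and independent of the edge $uv$. Conditioning on whether $uv$ is present and applying the same parity identity (now with $n-2$ in place of $n$) produces a closed form for $\Exp[X_u X_v]$. I expect the algebra to collapse to
$$\text{Cov}(X_u, X_v) = p(1-p)(1-2p)^{2(n-2)},$$
which is $o(1)$ because $|1-2p|^{2(n-2)} \le |1-2p|^{n-1} = o(1)$ for $n \ge 3$.

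Summing gives $\text{Var}(\odd(G)) \le n/4 + n(n-1) \cdot o(1) = o(n^2)$, and Chebyshev's inequality then forces $|\odd(G) - \Exp[\odd(G)]| = o(n)$ whp, establishing the claim. The only step requiring genuine care is the covariance computation: the parities of $\deg(u)$ and $\deg(v)$ are correlated through the shared potential edge $uv$, so independence fails, but once we condition on that single edge the remaining randomness splits, and the two conditional cases combine to leave only a small correction term $p(1-p)(1-2p)^{2(n-2)}$ that vanishes precisely under our hypothesis on $p$.
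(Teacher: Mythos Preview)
Your proof is correct and takes a genuinely different route from the paper's. The paper fixes a bipartition $V=V_1\cup V_2$ and, to count odd-degree vertices in $V_1$, first exposes the edges inside $V_1$; conditionally on those, the parity of $\deg(v)$ for each $v\in V_1$ depends only on the crossing edges incident to $v$, and these bundles are disjoint and hence independent, so the count $Y_1$ becomes a sum of independent indicators and Chernoff applies directly. Your second-moment argument is more elementary: no exposure trick is needed, and the exact covariance $p(1-p)(1-2p)^{2(n-2)}$ falls out cleanly from conditioning on the single shared edge $uv$. The trade-off is that Chebyshev only yields concentration with probability $1-o(1)$, whereas the paper's Chernoff bound gives $|\odd(G)-n/2|\le O(n^{2/3})$ with probability $1-e^{-\Omega(n^{1/3})}$; since the claim only asks for a whp $o(n)$ deviation, your weaker tail suffices.
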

\begin{proof}
Let us fix a bipartition of the vertices into two nearly equal parts $V=V_1\cup V_2$, where $n_1=|V_1|=\floor{n/2}$ and $n_2=|V_2|=\ceil{n/2}$. Our plan is to show that whp roughly half the vertices of $V_1$ and roughly half the vertices of $V_2$ have odd degree in $G$.

So let $Y_1$ be the number of odd-degree vertices in $V_1$. The probability that a specific vertex $v$ has odd degree is $\frac{1-(1-2p)^{n-1}}{2}$, so by the linearity of expectation, the expected number of odd degree vertices in $V_1$ is
\[ \Exp(Y_1)=\frac{n_1}{2}-\frac{n_1(1-2p)^{n-1}}{2}\sim \frac{n_1}{2} \]
for $\om{1/n}\le p \le 1-\om{1/n}$. We still need to prove that $Y_1$ is tightly concentrated around its mean.

Let us now expose the random edges spanned by $V_1$. Then the degrees of the vertices in $V_1$ are determined by the crossing edges between $V_1$ and $V_2$. At this point, each vertex in $V_1$ has $n_2$ incident edges unexposed, and these edges are all different. So let $v$ be any vertex in $V_1$. The probability that $v$ is connected to an odd number of vertices in $V_2$ is $p_1=\frac{1-(1-2p)^{n_2}}{2} \sim \frac{1}{2}$, so the probability that $v$ will end up having an odd degree is either $p_1$ or $(1-p_1)$ (depending on the parity of its degree in $G[V_1]$). This means that, conditioning on any collection of edges spanned by $V_1$, $Y_1$ is the sum of $n_1$ indicator variables with probabilities $p_1$ or $1-p_1$, so we can apply Claim~\ref{lem:chernoff} (a) and (c) with average probability $\min\{p_1,1-p_1\} \leq \bar{p}_1 \leq \max\{p_1,1-p_1\}$ to get
\[ \Prb(|Y_1-\Exp(Y_1)| > a) \le e^{-2a^2/n_1} + e^{-a^2/n_1\bar{p}_1}. \]
Then taking $a=n^{2/3}$ and using that $\bar{p}_1 \sim 1/2$ we get
\[ \Prb(|Y_1-\Exp(Y_1)| > n^{2/3}) \le 3e^{-2n^{1/3}} =o(1). \]

Repeating the same argument for $V_2$, we see that if $Y_2$ is the number of odd-degree vertices in $V_2$, then
\[ \Exp(Y_2)=\frac{n_2-n_2(1-2p)^{n-1}}{2} \sim n_2/2 \]
and
\[ \Prb(|Y_2-\Exp(Y_2)| > n^{2/3}) \le 3e^{-2n^{1/3}}. \]

So with probability at least $1-6e^{-2n^{1/3}} = 1-o(1)$ the number of odd vertices is $\frac{n}{2}(1+o(1))$.
\end{proof}

In order to show that typically there is a small set of edges covering the odd vertices, we will need some properties of sparse random graphs. The following somewhat technical lemma collects the tools we use in the proof.

\begin{LEMMA} \label{lem:prep}
Let $p=p(n)$ satisfy  $\om{ \frac{\log\log n}{n} } \le p \le \frac{\log^{10} n}{n}$. Then the following properties hold whp for $G\sim G(n,p)$:
\begin{enumerate}
  \item the giant component of $G$ covers all but at most $o(n)$ vertices, and all other components are trees,
  \item the independence number $\alpha(G)$ is at most $\frac{2\log (np)}{p}$,
  \item the diameter of the giant component of $G$ is at most $\frac{2\log n}{\log (np)}$,
  \item any set $T$ of at most $2n^{1/10}$ vertices spans at most $2|T|$ edges,
  \item any two disjoint vertex sets $T, T'$ of the same size $n^{1/10} \le t \le n/100$ are connected by less than $tnp/6$ edges, and
  \item all but at most $n/2\log^2 n$ vertices in $G$ have at least $np/5$ odd-degree neighbors.
\end{enumerate}
\end{LEMMA}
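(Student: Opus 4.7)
My plan is to treat the six properties in three tiers. For parts (1)--(3) I invoke classical facts about $G(n,p)$ in the regime $np=\om{1}$: the giant-component theorem yields a unique giant covering $(1-o(1))n$ vertices, and a first-moment count over connected subgraphs containing a cycle shows the remaining components are whp trees; the independence-number bound comes from $\binom{n}{k}(1-p)^{\binom{k}{2}}\le(e/(kp))^k$ evaluated at $k=\lceil 2\log(np)/p\rceil$, which is $o(1)$; and the diameter bound is the standard BFS-expansion argument, where each layer grows by a factor of $(1-o(1))np$.

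Parts (4) and (5) combine a first-moment count with a union bound. For (4),
\[
\Exp\!\left[\#\{T\colon|T|=t,\ e(T)\ge 2t+1\}\right]\le\binom{n}{t}\binom{\binom{t}{2}}{2t+1}p^{2t+1}\le\left(\frac{ne}{t}\right)^{t}\left(\frac{etp}{4}\right)^{2t+1},
\]
which is $n^{-\Omega(t)}$ for $t\le 2n^{1/10}$ and $p\le\log^{10}n/n$, hence summable over $t$. For (5) a crude tail bound $\Prb(e(T,T')\ge tnp/6)\le\binom{t^2}{tnp/6}p^{tnp/6}\le(6et/n)^{tnp/6}$, together with the union bound $\binom{n}{t}^2\le(ne/t)^{2t}$, succeeds because $np=\om{1}$ makes the per-pair exponent dominate.

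The main obstacle is (6); a plain second-moment/Chebyshev approach only yields $\Prb(X_v<np/5)=O(1/(np))$, which is not $o(1/\log^2 n)$ when $np$ is merely $\om{\log\log n}$. Instead I will condition more carefully. Fix a vertex $v$ and expose its incident edges, obtaining $N=N(v)$ of size $np/2\le d\le 2np$ whp. For each $u\in N$ write $\deg_{G-v}(u)=A_u+B_u$, where $A_u$ counts edges from $u$ to $V\setminus(N\cup\{v\})$ and $B_u$ counts edges from $u$ to $N\setminus\{u\}$. Additionally expose the edges spanned by $N$, fixing all $B_u$. Because the sets of potential edges underlying different $A_u$'s are pairwise disjoint, the $A_u$'s remain independent $\Bin(n-1-d,p)$ variables, so
\[
X_v=\sum_{u\in N}\mathbf{1}_{A_u\equiv B_u\!\pmod 2}
\]
is a sum of \emph{independent} Bernoullis with parameters $\tfrac12\pm\tfrac12(1-2p)^{n-1-d}=\tfrac12\pm o(1)$, and its conditional mean is at least $(d/2)(1-o(1))\ge(np/4)(1-o(1))$.

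A Chernoff-type lower-tail bound (Claim~\ref{lem:chernoff}~(c)) then gives
\[
\Prb(X_v<np/5\mid B,N)\le e^{-\Omega(np)}=e^{-\om{\log\log n}}=o(1/\log^2 n)
\]
uniformly in $B$. Removing the conditioning costs only the $o(1)$ probability that $|N(v)|$ deviates far from $np$, so $\Prb(X_v<np/5)=o(1/\log^2 n)$. By linearity, the expected number of vertices with fewer than $np/5$ odd-degree neighbors is $o(n/\log^2 n)$, and Markov's inequality delivers (6) whp. The only delicate step is the switch from a direct variance calculation to this conditional-Chernoff scheme; once the decomposition $\deg_{G-v}(u)=A_u+B_u$ is in hand, the independence of the $A_u$'s reduces the concentration to standard binomial estimates.
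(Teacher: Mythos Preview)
Your argument is correct and matches the paper's approach essentially line for line; in particular, your conditional-Chernoff scheme for part~(6)---expose the edges at $v$, then the edges inside $N(v)$, leaving the $A_u$'s as independent $\Bin(n-1-d,p)$ variables whose parities are Bernoulli$(\tfrac12\pm o(1))$---is exactly what the paper does. One small slip: when you say removing the conditioning ``costs only the $o(1)$ probability'' that $|N(v)|\notin[np/2,2np]$, you actually need this cost to be $o(1/\log^2 n)$ for the per-vertex bound $\Prb(X_v<np/5)=o(1/\log^2 n)$ to follow, but since $2e^{-np/20}=(\log n)^{-\omega(1)}$ in this range the repair is immediate.
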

\begin{proof}
For the first two properties we refer to Bollob\'as \cite{BBOOK}, while the third property was proved by Chung and Lu \cite{CL}. To prove the fourth one, note that the probability that a fixed set $T$ of size $t$ spans at least $2t$ edges is at most $\binom{\binom{t}{2}}{2t} \cdot p^{2t}$. So the probability that some $T$ of size $t\le 2n^{1/10}$ spans at least $2t$ edges is at most
\[ \sum_{t=5}^{2n^{1/10}} \binom{n}{t}\cdot \binom{t^2}{2t}\cdot p^{2t} \le \sum_{t=5}^{2n^{1/10}}( nt^4p^2 )^t \le \sum_{t=5}^{2n^{1/10}} n^{-t/2} = o(1). \]

We use a similar argument to prove the fifth property. For fixed $T_1$ and $T_2$ of size $t$, the probability that at least $tnp/6$ edges connect them in $G$ is at most $\binom{t^2}{tnp/6}p^{tnp/6}$. So the probability that the property does not hold can be bounded from above by
\[ \sum_{t=n^{1/10}}^{n/100} \binom{n}{t}^2\cdot \binom{t^2}{tnp/6}p^{tnp/6} \le \sum_{t=n^{1/10}}^{n/100} \left( \frac{en}{t}\right)^{2t}\cdot \left( \frac{et^2p}{tnp/6} \right)^{tnp/6} \le \sum_{t=n^{1/10}}^{n/100} \left( \frac{e^2n^2}{t^2} \cdot \left( \frac{6et}{n} \right)^{\log\log n/6}  \right)^t. \]
Here $\frac{6et}{n}<\frac{1}{2}$, so for large enough $n$ this probability is smaller than $\sum_{t=n^{1/10}}^{n/100} (1/2)^t < 2\cdot 2^{-n^{1/10}}=o(1)$.

The last property has a similar flavor to Claim~\ref{thm:odd_degrees}. We will prove that the probability that a particular vertex has fewer than $np/5$ odd neighbors is at most $1/\log^3 n$. Then the expected number of such vertices is at most $n/\log^3 n$, hence the probability that there are more than $n/2\log^2 n$ of them is, by Markov's inequality, at most $2/\log n =o(1)$.

So let us pick a vertex $v$ in $G$. Using Claim~\ref{lem:chernoff}, we see that the probability that it has fewer than $np/2$ or more than $2np$ neighbors is at most $2e^{-np/20}$. Assume this is not the case, and expose the edges spanned by the neighborhood $N$ of $v$. Now the number of odd neighbors of $v$ is determined by the edges between $N$ and $\overline{N}=V-(N\cup v)$. In fact, the probability that a vertex $u\in N$ is connected to an odd number of vertices in $\overline{N}$ is $p_1=\frac{1-(1-2p)^{n-d-1}}{2} \sim \frac{1}{2}$, where $d=|N|<n/2$ is the degree of $v$, so $u$ has an odd degree in $G$ with probability $p_1$ or $(1-p_1)$. Thus the number of odd neighbors of $v$ is a sum of $d$ indicator random variables with probabilities $p_1$ or $(1-p_1)$. Another application of Claim~\ref{lem:chernoff} then shows that the probability that $v$ has fewer than $np/5 < dp_1/2$ odd neighbors (conditioned on $np/2\le d\le 2np$) is at most $e^{-dp_1/20}<e^{-np/50}$.

Summarizing the previous paragraph, the probability that $v$ has fewer than $np/5$ odd neighbors is at most
\[  2e^{-np/20}+e^{-np/50} \le 3e^{-np/50} \le e^{-3\log\log n}=\frac{1}{\log^3 n}  \]
for large enough $n$, establishing the sixth property.
\end{proof}

Now we are ready to prove the following statement, which will serve as a tool to get rid of odd degrees. We should point out that this lemma only works for $p\gg \frac{\log n}{n}$. However, its flexibility -- the fact that we can apply it to any set $S$ -- will prove useful when tackling the dense case.

\begin{LEMMA} \label{lem:matching}
Let $p=\om{ \frac{\log n}{n} }$ but $p\le \frac{\log^{10} n}{n}$. Then whp in $G\sim G(n,p)$ for any vertex set $S$ of even cardinality, there is a collection $E_0$ of $\frac{|S|}{2}+o(n)$ edges in $G$ such that $S$ is exactly the set of vertices incident to an odd number of edges in $E_0$.
\end{LEMMA}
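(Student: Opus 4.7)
The plan is to produce $E_0$ by absorbing most of $S$ into a matching $M \subseteq E(G)$ whose edges lie entirely inside $S$, and then patching up the leftover vertices of $S$ using short paths in $G \setminus M$. Since $p = \om{\log n / n}$ lies strictly above the connectivity threshold, I may assume whp that $G$ is connected and satisfies all the properties listed in Lemma~\ref{lem:prep}.

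First I would build $M$ greedily: repeatedly pick any edge of $G$ whose two endpoints are both still in the uncovered part of $S$, add it to $M$, and remove both endpoints from $S$. When the process halts, the uncovered set $S' \subs S$ is an independent set in $G$, so by property~2 of Lemma~\ref{lem:prep} we have $|S'| \le \alpha(G) \le 2\log(np)/p$. Since $|S|$ is even and $|S \setminus S'| = 2|M|$, the set $S'$ also has even cardinality; I pair its vertices arbitrarily as $\{u_i, v_i\}$ for $i = 1, \ldots, k$, where $k = |S'|/2$.

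Next, one pair at a time, I would pick a path $P_i$ from $u_i$ to $v_i$ inside $G \setminus (M \cup P_1 \cup \cdots \cup P_{i-1})$ of length at most $L := C\log n/\log(np)$ for a suitably large absolute constant $C$. Setting $E_0 := M \cup P_1 \cup \cdots \cup P_k$ as an edge-disjoint union, a quick parity check shows that the odd-degree set of $E_0$ is exactly $S$: each matched $v \in S \setminus S'$ has its unique $M$-edge plus an even number of path edges (it is endpoint of no $P_i$); each $v \in S'$ is the endpoint of precisely one $P_i$ (odd contribution) and lies in the interior of others (each contributing $2$); each $v \notin S$ has no $M$-edge and belongs to an even number of path edges. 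For the size, using $|S'|/2 \le \log(np)/p$ and $L \le C\log n/\log(np)$,
\[ |E_0| \;\le\; \frac{|S| - |S'|}{2} + kL \;\le\; \frac{|S|}{2} + \frac{C\log n}{p} \;=\; \frac{|S|}{2} + o(n), \]
where the last equality holds precisely because $p = \om{\log n / n}$.

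The main technical obstacle is verifying that each path $P_i$ can actually be found. At stage $i$, the edges removed so far number at most $|M| + (i-1)L = O(n)$, which is a vanishing fraction of the $\Theta(n^2 p)$ edges of $G$ since $np = \om{\log n}$, so vertex degrees drop only by a negligible amount. A standard BFS/expansion argument based on properties~3,~4, and~5 of Lemma~\ref{lem:prep} then shows that the residual graph still has diameter $O(\log n/\log(np))$, providing a path of length at most $L$ as required; this is where the bulk of the work lies and is the reason we need $p$ to be above the connectivity threshold.
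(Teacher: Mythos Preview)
Your overall strategy---greedy matching inside $S$, then short paths for the leftover pairs---is exactly what the paper does. The difference is that you insist the $P_i$ be edge-disjoint from $M$ and from one another, which forces you into the residual-diameter argument you flag as ``the bulk of the work.'' The paper sidesteps this completely: it takes each $P_j$ to be a shortest path in $G$ itself (allowing arbitrary overlaps with $M$ and with the other $P_j$), and then sets $E_0$ to be the \emph{symmetric difference} (``mod~2 union'') of $M$ and all the $P_j$. Parity at every vertex is then automatic, the diameter bound from Lemma~\ref{lem:prep} applies directly to $G$, and the whole argument is a couple of lines.

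Your residual-diameter claim also has a soft spot as written. The assertion that ``vertex degrees drop only by a negligible amount'' does not follow from the edge count: removing an $o(1)$-fraction of all edges controls the \emph{average} degree drop, not individual ones. The $kL = O((\log n)/p)$ path-edges removed so far could in principle concentrate on a single vertex (each path contributes up to two incident edges there), and for $p$ just above $(\log n)/n$ one has $2k \le 2\log(np)/p \gg np$, so nothing you have stated prevents a vertex from being stripped of all its edges. Ruling this out would require a genuine expansion argument controlling how the greedily chosen shortest paths spread, which is real additional work not present in your sketch. The symmetric-difference trick makes all of this unnecessary.
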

\begin{proof}
Take any set $S$. First, we find a matching in $S_0=S$ greedily, by selecting one edge $e_i$ at a time spanned by $S_i$, and then removing the vertices of $e_i$ from $S_i$ to get $S_{i+1}$. At the end of the process, the remaining set of vertices $S'\subs S$ is independent in $G$. The second property from Lemma~\ref{lem:prep} implies that $|S'|\le \frac{2\log(np)}{p}$. Now let us pair up the vertices in $S'$ arbitrarily, and for each of these pairs $\{v_{j,1},v_{j,2}\}$ take a shortest path $P_j$ in $G$ connecting them. (Recall that for $p=\om{ \frac{\log n}{n} }$ the random graph $G(n,p)$ is whp connected.) The third property ensures that each of the $P_j$ contains at most $\frac{2\log n}{\log (np)}$ edges. Note that we do not assume these paths to be edge-disjoint and they may contain the $e_i$'s as well.

Let us define $E_0$ to be the ``mod 2 union'' of the $P_j$ and the $e_i$, i.e., we include an edge $e$ in $E_0$ if it appears an odd number of times among them. Then the set of odd-degree vertices in $E_0$ is indeed $S$, and the number of edges is at most
\[ \frac{|S|}{2}+\frac{2\log (np)}{p}\cdot \frac{2\log n}{\log (np)}= \frac{|S|}{2}+o(n) \]
since $np\gg \log n$.
\end{proof}

We can actually push the probability $p$ down a bit by giving up on the above mentioned flexibility. The following lemma takes care of the odd-degree vertices and the small components in the sparse case.

\begin{LEMMA} \label{lem:matching2}
Let $p=\om{ \frac{\log\log n}{n} }$, but $p \le \frac{\log^ {10}n}{n}$, and let $S$ be the set of odd-degree vertices in $G\sim G(n,p)$. Then whp there is a collection $E_0$  of $\frac{|S|}{2}+o(n)$ edges in $G$ such that $G-E_0$ is an Euler graph.
\end{LEMMA}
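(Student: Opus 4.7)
The plan is to follow the outline of Lemma~\ref{lem:matching}, with modifications to handle two issues in the sparser regime: $G$ is no longer connected, and the worst-case diameter of $G_1$ is no longer short enough. By Lemma~\ref{lem:prep}(1), $G$ decomposes into a giant component $G_1$ plus tree components $T_1, \dots, T_k$ covering $o(n)$ vertices in total. For each $T_i$, every $v \in V(T_i)$ satisfies $\deg_G(v) = \deg_{T_i}(v)$, so simply taking $E(T_i)$ into $E_0$ flips the parity at exactly the vertices of $S \cap V(T_i)$ and empties the component (making it trivially Euler). This contributes at most $\sum_i(|V(T_i)|-1) = o(n)$ edges and reduces the task to finding $E_0' \subseteq E(G_1)$ with odd-vertex set $S_1 := S \cap V(G_1)$ and $|E_0'| \le |S_1|/2 + o(n)$.

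Inside $G_1$, I would greedily take a matching $M$ of $S_1$ in $G_1$ until the unmatched set $U \subseteq S_1$ is independent in $G_1$; by Lemma~\ref{lem:prep}(2), $|U| \le \alpha(G_1) \le 2\log(np)/p$. Then I would pair $U$ into $|U|/2$ pairs and connect each pair by a short path in $G_1$, defining $E_0'$ as the mod-$2$ union of $M$ and these paths. Since every component of $G$ has an even number of odd-degree vertices and $M$ matches them in pairs within $G_1$, $|U|$ is even and the odd-vertex set of $E_0'$ is exactly $S_1$.

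The main obstacle is bounding the total path length. Using the diameter bound $O(\log n/\log(np))$ from Lemma~\ref{lem:prep}(3) yields a total of $O(\log n/p)$ edges, which is $o(n)$ only when $np = \omega(\log n)$ --- the regime already covered by Lemma~\ref{lem:matching}. To push down to $np = \omega(\log\log n)$, the key idea is to pair most of $U$ via paths of length exactly $2$, i.e., through a common neighbor in $G_1$. The small-set expansion implied by Lemma~\ref{lem:prep}(4) (if the second neighborhood of $v$ were too small, then $\{v\} \cup N(v) \cup N(N(v))$ would span more than twice its size in edges) forces the BFS ball of radius $2$ around any $v \in V(G_1)$ to have size $\Omega((np)^2)$; combined with $|U|/n \le 2\log(np)/(np)$, this ball typically contains $\omega(1)$ vertices of $U$. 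Greedily pairing most of $U$ through such length-$2$ paths contributes $O(|U|) = o(n)$ edges, while the $O(n/\log^2 n)$ ``bad'' vertices identified by Lemma~\ref{lem:prep}(6), for which the expected count fails, are paired via worst-case shortest paths and contribute only $O((n/\log^2 n)\cdot \log n/\log(np)) = O(n/(\log n \log(np))) = o(n)$ extra edges. The technical heart of the argument is making this greedy pairing rigorous whp, which amounts to showing that the unmatched set $U$ is sufficiently uniformly distributed within $G_1$ for the expansion estimates to translate into an actual near-perfect matching on length-$2$ paths.
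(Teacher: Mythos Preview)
Your proposal has a genuine gap at precisely the step you yourself flag as ``the technical heart.'' After the greedy matching, the leftover set $U\subseteq S$ is an independent set of size up to $2\log(np)/p$, but you have no control over how $U$ is distributed inside $G_1$. The heuristic ``the 2-ball around $v$ typically contains $\omega(1)$ vertices of $U$'' would be valid if $U$ were a uniformly random subset of the right density, but $U$ is produced by a greedy process on $G$ itself and could in principle cluster in regions where length-2 connections among its members are rare. Your appeal to Lemma~\ref{lem:prep}(6) does not rescue this: that property says most vertices have many neighbours in $S$, but every $S$-neighbour of a vertex $v\in U$ has already been matched and therefore lies in $S\setminus U$, so this tells you nothing about how many $U$-vertices sit in the 2-ball of $v$. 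Without a genuine argument that $U$ is well spread, the length-2 pairing step is unsupported and the proof is incomplete.

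The paper sidesteps this difficulty by replacing the greedy matching with the defect form of Tutte's theorem applied to $G[S]$. Using properties (2), (4), (5), and (6) of Lemma~\ref{lem:prep}, one shows that removing any $t$ vertices from $G[S]$ leaves at most $t+n/\log^2 n$ components; hence $G[S]$ contains a matching missing at most $n/\log^2 n$ vertices of $S$. That residual is small enough that connecting the leftover pairs by worst-case shortest paths of length $O(\log n/\log(np))$ already costs only $O\big((n/\log^2 n)\cdot \log n/\log(np)\big)=o(n)$ edges, so no length-2 argument is needed at all. Your treatment of the small tree components is correct and matches the paper's.
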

\begin{proof}
Let us assume that $G$ satisfies all the properties from Lemma~\ref{lem:prep}. Our plan is to show that there is a matching in the induced subgraph $G_S=G[S]$ covering all but at most $n/\log^2 n$ vertices in $S$, using the defect version of Tutte's theorem on $G_S$. For this we need that the deletion of any set $T$ of $t$ vertices from $G_S$ creates no more than $t+n/\log^2 n$ odd-vertex components. In fact, we will prove that the deletion of $t$ vertices breaks $G_S$ into at most $t+n/\log^2 n$ components.

If $t\ge \frac{n}{100}$ then this easily follows from the second property: if at least $t$ components were created, then we could find an independent set in $G$ of size $t$ simply by picking a vertex from each component. But $\alpha(G)\le\frac{2\log (np)}{p}< \frac{n}{100}$, so this is impossible.

Now suppose there is a set $T$ of $t< \frac{n}{100}$ vertices such that $G_S-T$ has at least $t+\frac{n}{\log^2 n}$ components. Here the number of components containing at least $2\log^2 n$ vertices is clearly at most $\frac{n}{2\log^2 n}$. On the other hand, according to the sixth property of Lemma ~\ref{lem:prep}, there are at most $\frac{n}{2\log^2 n}$ components containing a vertex of degree less than $np/5$ in $G_S$. Thus there are $t$ components of size at most $2\log^2 n$, and hence of average degree at most 4 by the fourth property, such that all the vertices contained in them have at least $np/5$ neighbors in $G_S$. Each such component then has a vertex with at least $np/5-4$ neighbors in $T$. Pick a vertex like that from $t$ of these components to form the set $T'$.

We see that there are at least $t(np/5-4) > tnp/6$ edges going between $T$ and $T'$, but this contradicts the fourth property if $t\le n^{1/10}$ and the fifth property if $n^{1/10} < t < \frac{n}{100}$. So by Tutte's theorem, we can find some edge set $M$ that forms a matching in $S$, covering all but $\frac{n}{\log^2 n}$ of its vertices.

Now let $F$ be the set of edges appearing in the small components of $G$ (recall that in this probability range, $G(n,p)$ whp has a giant component and possibly some smaller ones). According to the first property, these edges span trees covering $o(n)$ vertices in total, so $|F|=o(n)$. We see that the set $M\cup F$ takes care of all odd vertices outside the giant component and all but at most $\frac{n}{\log^2 n}$ of them inside. The rest of the proof follows the idea from the previous lemma: we pair up the remaining odd vertices arbitrarily and take shortest paths $P_j$ in $G$ connecting them. Once again, the third property implies that each path has length at most $\frac{2\log n}{\log (np)}$. Then $E_0$, the ``mod 2 union'' of the $P_j$ and $M\cup F$ satisfies all requirements and uses at most
\[ \frac{|S|}{2} + o(n) + \frac{n}{\log^2 n}\cdot \frac{2\log n}{\log (np)}= \frac{|S|}{2}+o(n) \]
edges.
\end{proof}

\section{Cycle decompositions in sparse random graphs} \label{sec:sparse}

In this section we show that Euler subgraphs of sparse random graphs can be decomposed into $o(n)$ edge-disjoint cycles. The following statement from \cite{CFS14}, which we use to find long cycles, is an immediate consequence of applying P\'osa's rotation-extension technique \cite{P76} as described in \cite{BBDK}.

\begin{LEMMA} \label{lem:posa}
If a graph $G$ does not contain any cycle of length at least $3t$, then there is a set $T$ of at most $t$ vertices such that $|N(T)|\le 2|T|$.
\end{LEMMA}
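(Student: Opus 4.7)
The plan is to prove the contrapositive via P\'osa's rotation--extension technique. Assume that every $T\subseteq V(G)$ with $|T|\le t$ satisfies $|N(T)|>2|T|$; the goal is then to exhibit a cycle of length at least $3t$ in $G$.

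Pick a longest path $P=v_0v_1\cdots v_\ell$ in a component of $G$ (if $G$ is disconnected, work in the relevant component, since the hypothesis is inherited). By the maximality of $P$, every neighbor of $v_0$ lies on $P$, and any edge $v_0v_i\in E(G)$ yields via a P\'osa rotation the equally long path $v_{i-1}v_{i-2}\cdots v_0v_iv_{i+1}\cdots v_\ell$ with new left endpoint $v_{i-1}$. Let $R$ be the set of all vertices reachable as a left endpoint after some sequence of such rotations while keeping $v_\ell$ fixed. The standard P\'osa counting argument---assigning each non-endpoint neighbor of a vertex in $R$ to at most two pivot positions in the rotation tree---then delivers $|N(R)\setminus R|\le 2|R|$. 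Combined with the assumed expansion, this forces $|R|>t$.

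Next, fix some $w\in R$ and repeat the process starting from $w$, rotating the corresponding longest path while keeping $w$ as an endpoint, to obtain a second endpoint set $R_w$ with $|R_w|>t$ and $|N(R_w)|>2|R_w|$. A double-counting argument combining the sizes of $R$ and $R_w$ with these neighborhood bounds produces an edge $w'w''$ with $w'\in R$ and $w''\in R_{w'}$. Closing the appropriate rotated longest path through this edge yields a cycle, and a careful accounting of the path segment traversed---essentially the length of the two rotation sequences plus the connecting edge---shows that the cycle has length at least $3t$, contradicting the no-long-cycle hypothesis.

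The main obstacle is the tight bookkeeping in both steps: proving the P\'osa estimate $|N(R)\setminus R|\le 2|R|$ requires a precise charging scheme on the rotation tree, and extracting the exact factor $3$ in the final cycle-length computation requires carefully tracking how the two rotation sequences combine through the closing edge. Both are standard components of rotation--extension arguments in the literature, and together they make the claim follow as a clean consequence.
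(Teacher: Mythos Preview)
The paper does not prove this lemma; it quotes it from \cite{CFS14} and remarks only that it ``is an immediate consequence of applying P\'osa's rotation--extension technique \cite{P76} as described in \cite{BBDK}.'' Your plan is precisely that standard argument, so it matches what the paper points to.

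One small correction worth flagging: the factor $3$ in the cycle length does not come from ``the length of the two rotation sequences plus the connecting edge.'' Once you close the path via an edge between some $w\in R$ and $w'\in R_w$, the resulting cycle uses \emph{all} of $V(P)$ and therefore has length $\ell+1$; the double rotation is needed only to locate the closing edge, not to measure the cycle. The bound $\ell\ge 3t$ is obtained earlier and more directly: since $|R|>t$, pick $R'\subseteq R$ with $|R'|=t$, apply the expansion hypothesis to get $|N(R')|\ge 2t+1$, and observe that $R'\cup N(R')\subseteq V(P)$ (each element of $R'$ is an endpoint of a longest path on the vertex set $V(P)$, so all its neighbours lie on $P$), giving $|V(P)|\ge 3t+1$. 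Keeping this straight will make the write-up of the ``tight bookkeeping'' you mention much cleaner.
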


We say that a graph $G$ is \emph{sufficiently sparse} if any set of vertices $S$ spans less than $r|S|$ edges, where $r=\max\{\frac{|S|}{12\log^2 n},7 \}$. Note that any subgraph of a sufficiently sparse graph is also sufficiently sparse.

In what follows, we proceed by showing that on the one hand, for $p$ small enough the graph $G(n,p)$ is typically sufficiently sparse, while on the other hand, any sufficiently sparse graph contains few edge-disjoint cycles covering most of the edges.

\begin{LEMMA} \label{lem:sparseg}
Let $p=p(n)<n^{-1/6}$ and let $G\sim G(n,p)$. Then whp $G$ is sufficiently sparse.
\end{LEMMA}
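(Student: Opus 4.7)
\noindent\emph{Proof plan.} The plan is a routine first-moment union bound over all candidate ``dense'' sets. If $|S|=s\le 14$, then $\binom{s}{2} < 7s \le rs$ and the property is automatic, so assume $s\ge 15$. For each such $S$,
\[
\Prb\bigl(S \text{ spans at least } rs \text{ edges}\bigr) \le \binom{\binom{s}{2}}{rs} p^{rs} \le \left(\frac{esp}{2r}\right)^{rs},
\]
so after taking a union bound over the $\binom{n}{s}\le(en/s)^s$ choices of $S$, it suffices to show that the resulting sum over $s$ is $o(1)$.

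The natural split follows the definition $r=\max\{s/(12\log^2 n),7\}$. For $15\le s\le 84\log^2 n$ we have $r=7$, and the per-vertex factor
\[
\frac{en}{s}\left(\frac{esp}{14}\right)^{7} \;=\; \frac{e^{8}\,s^{6}\,p^{7}\,n}{14^{7}}
\]
is $O(s^{6}n^{-1/6})=O(\log^{12}n\cdot n^{-1/6})\le n^{-1/7}$ for large $n$, using $p<n^{-1/6}$. Thus this range contributes at most $\sum_{s\ge 15} n^{-s/7}=o(1)$. For $s>84\log^2 n$ we have $r=s/(12\log^2 n)$ and $esp/(2r)=6ep\log^2 n\le n^{-1/7}$, so the full bound becomes
\[
\left(\frac{en}{s}\right)^{s}\left(n^{-1/7}\right)^{rs}\;\le\; n^{\,s-s^{2}/(84\log^2 n)},
\]
which is $n^{-\Omega(\log^{2} n)}$ once $s\ge 85\log^2 n$, and thus easily summable.

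The only mildly delicate step is the narrow window $84\log^2 n<s<85\log^2 n$, where the ``large-$s$'' bound has not yet kicked in. There I would simply fall back to the weaker event $\{e(S)\ge 7s\}$ (valid since $r\ge 7$ throughout) and reuse the small-$s$ estimate $n^{-s/7}$, which in this window is already $n^{-\Omega(\log^{2} n)}$ per size. Beyond this bookkeeping around the transition, I do not anticipate a real obstacle; the definition $r=\max\{s/(12\log^2 n),7\}$ is tailored precisely so that the two regimes overlap cleanly, and the hypothesis $p<n^{-1/6}$ is used only to make each per-vertex factor at most $n^{-\delta}$ for some fixed $\delta>0$.
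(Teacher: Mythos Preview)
Your proof is correct and uses the same first-moment union bound as the paper. The only difference is organizational: you split into the two regimes $r=7$ and $r=s/(12\log^2 n)$ and then patch the transition window, whereas the paper handles all $s$ at once by exploiting both defining inequalities of $r$ simultaneously. Writing the per-size bound as $\bigl(n\cdot(esp/2r)^r\bigr)^s$, the paper uses $r\ge s/(12\log^2 n)$ to bound the base $esp/(2r)\le 6ep\log^2 n<1$, and then uses $r\ge 7$ (together with the base being $<1$) to bound $(6ep\log^2 n)^r\le (6ep\log^2 n)^7$, yielding a single uniform bound $x=n\cdot(6e\log^2 n\cdot n^{-1/6})^7=o(1)$ and a geometric sum $\sum_s x^s=o(1)$. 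This eliminates your case analysis and the transition-window bookkeeping entirely; otherwise the arguments are equivalent.
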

\begin{proof}
For fixed $s$, the probability that there is an $S$ of size $s$ containing at least $rs$ edges is at most
\[ \binom{n}{s}\cdot \binom{\binom{s}{2}}{rs}\cdot p^{rs} \le n^s \cdot \left(\frac{esp}{2r}\right)^{rs}= \left(n\cdot \left(\frac{esp}{2r}\right)^r\right )^s. \]
Put $x=n\cdot \left(\frac{esp}{2r}\right)^r$. If we further assume that $r\ge\frac{s}{12\log^2 n}$ and $r\ge7$ then we get that for large $n$ and $p<n^{-1/6}$
\[ x \le n\cdot (6ep\log^2 n)^r \le n\cdot (6e)^7 \left( \frac{\log^2 n}{n^{1/6}} \right)^7 = o(1),\]
where we used the fact that $6ep\log^2 n<1$ for large $n$. Hence the probability that for some $s$ there is a set $S$ of size $s$ which spans more than $\max\{\frac{s}{12\log^2 n}, 7\}\cdot s$ edges, i.e., that  $G$ is not sufficiently sparse, is at most
\[ \sum_{i=2}^{n} x^i < \frac{x^2}{1-x} < x =o(1).\]
\end{proof}

\begin{COR} \label{lem:sparse}
For $p<n^{-1/6}$, whp all subgraphs of $G(n,p)$ are sufficiently sparse.
\end{COR}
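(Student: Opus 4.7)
The plan is essentially a one-line reduction to Lemma~\ref{lem:sparseg} together with the monotonicity remark built into the definition of \emph{sufficiently sparse}. First, I would invoke Lemma~\ref{lem:sparseg} to obtain an event of probability $1-o(1)$ on which $G \sim G(n,p)$ itself is sufficiently sparse, i.e.\ every $S \subseteq V(G)$ spans fewer than $\max\{|S|/(12\log^2 n),\,7\}\cdot |S|$ edges in $G$.

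Next, I would observe that this property is manifestly monotone with respect to edge-subgraphs. If $H$ is any subgraph of $G$ and $S \subseteq V(H) \subseteq V(G)$, then the set of edges of $H$ spanned by $S$ is a subset of the set of edges of $G$ spanned by $S$, so the same upper bound $\max\{|S|/(12\log^2 n),\,7\}\cdot |S|$ applies. The bound depends on $n$ (the number of vertices of the ambient random graph) rather than on $|V(H)|$, so there is no shrinkage issue coming from $H$ having fewer vertices than $G$; the threshold $r$ in the definition is only a function of $|S|$ and $n$.

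Putting the two steps together, on the $1-o(1)$ event from Lemma~\ref{lem:sparseg} every subgraph of $G(n,p)$ is sufficiently sparse, which is exactly the statement of the corollary. There is no real obstacle here — the work has already been done in Lemma~\ref{lem:sparseg}, and the corollary is being recorded as a convenient reformulation for later use, when we will want to apply the sparsity property to subgraphs obtained from $G(n,p)$ by deleting various edge sets (e.g.\ removing the edge set $E_0$ that kills odd degrees, or peeling off long cycles one at a time).
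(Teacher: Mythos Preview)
Your proposal is correct and matches the paper's intended argument: the paper records the corollary without proof, relying on Lemma~\ref{lem:sparseg} together with the remark immediately following the definition of \emph{sufficiently sparse} that the property is inherited by all subgraphs. Your observation about the parameter $n$ being fixed by the ambient graph is a reasonable reading of the definition and does not affect the argument.
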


Using these lemmas we can derive one of our main tools, the fact that we can make a subgraph of a random graph relatively sparse by iteratively removing long cycles.

\begin{PROP} \label{prop:cycrem}
Let $H$ be a sufficiently sparse $n$-vertex graph of average degree $d>84$. Then it contains a cycle of length at least $d\log^2 n$.
\end{PROP}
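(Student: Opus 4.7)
The plan is to combine the standard ``pass to a minimum-degree subgraph'' trick with Lemma~\ref{lem:posa} (a consequence of Pósa's rotation-extension) and the sparsity hypothesis on $H$. Assume for contradiction that $H$ has no cycle of length at least $d\log^2 n$, and set $t = d\log^2 n / 3$ so that the nonexistent cycle length is $3t$.

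First I would pass to a subgraph $H' \subseteq H$ of minimum degree at least $d/2$ by iteratively removing vertices of degree less than $d/2$; the usual count shows this succeeds because the average degree of $H$ is $d$. Since $H$ is sufficiently sparse and sparsity is inherited by subgraphs (as noted after the definition), $H'$ is also sufficiently sparse. Of course $H'$ contains no cycle of length at least $3t$ either, so Lemma~\ref{lem:posa} applied to $H'$ yields a set $T \subseteq V(H')$ with $|T| \le t$ and $|N_{H'}(T)| \le 2|T|$. Put $S = T \cup N_{H'}(T)$, so $|S| \le 3|T| \le 3t = d\log^2 n$. Since every edge of $H'$ incident to $T$ has its other endpoint in $S$, summing degrees in $T$ gives
\[ 2 e_{H'}(S) \;\ge\; 2 e_{H'}(T) + e_{H'}(T, N_{H'}(T)) \;=\; \sum_{v \in T} \deg_{H'}(v) \;\ge\; \frac{|T| d}{2}, \]
so $e_{H'}(S) \ge |T| d / 4$.

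Now I would derive a contradiction from the sufficient sparsity of $H'$, which guarantees $e_{H'}(S) < r |S|$ with $r = \max\{|S|/(12 \log^2 n), 7\}$. If $|S| \ge 84 \log^2 n$, then $r = |S|/(12\log^2 n)$ and the bound $|T|d/4 \le e_{H'}(S) < |S|^2/(12\log^2 n) \le 9|T|^2/(12\log^2 n)$ rearranges to $|T| > d\log^2 n / 3 = t$, contradicting $|T| \le t$. If instead $|S| < 84\log^2 n$, then $r = 7$, so $|T|d/4 \le e_{H'}(S) < 7|S| \le 21|T|$, which forces $d < 84$, contradicting the hypothesis $d > 84$. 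Either way we reach a contradiction, so $H$ must contain a cycle of length at least $d\log^2 n$.

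The only real obstacle is calibrating constants: the factor $3$ in Pósa's ``$3t$'' exactly matches the blow-up $|S| \le 3|T|$, and the constants $12$ (in the sparsity definition) and $84$ (in the hypothesis) are tuned so that the two branches of $r$ both yield contradictions at the same threshold. Everything else is routine -- an average-degree reduction, one invocation of Lemma~\ref{lem:posa}, and a double-counting of edges on $T \cup N(T)$.
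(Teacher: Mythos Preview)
Your proof is correct and follows essentially the same route as the paper's: pass to the $d/2$-core, invoke Lemma~\ref{lem:posa} to get $T$ with $|T|\le t=d\log^2 n/3$ and $|N(T)|\le 2|T|$, set $S=T\cup N(T)$, and compare the lower bound $e(S)\ge |T|d/4$ against the sufficient-sparsity upper bound. The only cosmetic difference is that the paper avoids your case split by first converting $e(S)\ge |T|d/4\ge |S|d/12$ (using $|S|\le 3|T|$) and then observing in one line that $d/12>7$ and $d/12\ge |S|/(12\log^2 n)$, so $e(S)\ge r|S|$ directly.
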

\begin{proof}
Assume to the contrary that there is no such cycle. Define $H\subs G$ to be the $d/2$-core of $G$, i.e., the non-empty subgraph obtained by repeatedly removing vertices of degree less than $d/2$. Since there is no cycle of length at least $d\log^2 n$ in $H$, we can apply Lemma~\ref{lem:posa} to find a set $T$ of $t\le d\log^2 n/3$ vertices such that $|N(T)|\le 2|T|=2t$. Let $S=T\cup N(T)$ and $s=|S|$, then the minimum degree condition implies that there are at least $dt/4$ edges incident to $T$, hence the set $S$ spans at least $sd/12$ edges.

Note that $r=d/12>7$. Also, since $s\le 3t\le d\log^2 n$, we have $r\ge s/12\log^2 n$. So the set $S$ spans at least $\max\{\frac{s}{12\log^2 n},7\}\cdot |S|$ edges, contradicting the assumption that $H$ is sufficiently sparse.
\end{proof}

\begin{COR} \label{thm:cycrem}
Let $p<n^{-1/6}$. Then whp $G\sim G(n,p)$ has the following property. If $H$ is a subgraph of $G$ on $n_0$ vertices of average degree $d$, then $H$ can be decomposed into a union of at most $2n_0/\log n_0$ cycles and a graph $H'$ of average degree at most $84$.
\end{COR}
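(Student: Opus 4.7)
The plan is to iteratively extract long cycles from $H$, shrinking the average degree geometrically. By Corollary~\ref{lem:sparse}, we may condition on the high-probability event that every subgraph of $G$ is sufficiently sparse; fix such a $G$, let $H$ be any subgraph on $n_0$ vertices with average degree $d$, and set $H_0 = H$. At stage $i$, if the current average degree $d_i = 2|E(H_i)|/n_0$ exceeds $84$, then viewing $H_i$ as a sufficiently sparse $n_0$-vertex graph, Proposition~\ref{prop:cycrem} yields a cycle $C_i \subseteq H_i$ of length at least $d_i \log^2 n_0$; set $H_{i+1} = H_i - E(C_i)$. Since $H_{i+1} \subseteq G$ it is still sufficiently sparse, so the process continues until $d_i \le 84$, at which point we output $H' = H_k$.

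Because each step removes at least $d_i \log^2 n_0$ edges from a graph with $n_0 d_i / 2$ edges, we obtain the recursion
\[ d_{i+1} \le d_i - \frac{2 d_i \log^2 n_0}{n_0} \le d_i \exp\!\left(-\frac{2 \log^2 n_0}{n_0}\right), \]
so $d_k \le d\, e^{-2k \log^2 n_0 / n_0}$. Since the average degree of an $n_0$-vertex graph satisfies $d \le n_0 - 1$, after $k = \bigl\lceil n_0 \log(d/84) / (2\log^2 n_0) \bigr\rceil$ iterations we reach $d_k \le 84$. For $n_0$ sufficiently large this gives $k \le n_0/(2\log n_0) + 1 \le 2 n_0 / \log n_0$, matching the required bound on the number of cycles.

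The main point to check is that Proposition~\ref{prop:cycrem}'s hypotheses are preserved at every step. The average-degree condition is the loop invariant. Sufficient sparsity of $H_i$, as an $n_0$-vertex graph, follows from the conditioned event: for any $S \subseteq V(H_i) \subseteq V(G)$,
\[ |E_{H_i}(S)| \le |E_G(S)| < \max\!\left\{\tfrac{|S|}{12\log^2 n},\, 7\right\} |S| \le \max\!\left\{\tfrac{|S|}{12\log^2 n_0},\, 7\right\} |S|, \]
since $n_0 \le n$. A final small subtlety is that cycle deletion may create isolated vertices; keeping them inside $H_i$ ensures $n_0$ stays fixed, so both the geometric-decay rate and the $\log^2 n_0$ factor in the cycle-length lower bound remain stable throughout the iteration.
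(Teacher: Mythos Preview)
Your proof is correct and follows essentially the same approach as the paper: condition on sufficient sparsity via Corollary~\ref{lem:sparse}, then iteratively pull out long cycles using Proposition~\ref{prop:cycrem} until the average degree drops below $84$. The only difference is in the bookkeeping: the paper groups the iterations into ``halving phases'' (each phase uses at most $n_0/\log^2 n_0$ cycles and there are at most $\log_2 d$ phases), whereas you track a per-step geometric decay $d_{i+1}\le d_i\exp(-2\log^2 n_0/n_0)$; both arguments yield the same $O(n_0/\log n_0)$ bound, and your explicit check that sufficient sparsity transfers from $n$ to $n_0$ is a nice touch that the paper leaves implicit.
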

\begin{proof}
By Corollary~\ref{lem:sparse} all subgraphs of $G(n,p)$ are sufficiently sparse whp. So we can repeatedly apply Proposition~\ref{prop:cycrem} to remove cycles from $H$ as follows. Let $H_0=H$. As long as the graph $H_i$ has average degree $d_i>84$, we find a cycle $C_{i+1}$ in it of length at least $d_i \log^2 n_0$ and define $H_{i+1}=H_i-E(C_{i+1})$. After some finite number (say $l$) of steps we end up with a graph $H'=H_l$ of average degree at most $84$. We need to bound $l$.

Going from $H_i$ of average degree $d_i$ to $H_j$, the first graph of average degree below $d_i/2$, we remove at most $d_in_0/2$ edges using cycles of length at least $\frac{d_i}{2}\log^2 n_0$. So the number of cycles we removed is at most $n_0/\log^2 n_0$. Thus if $H$ had average degree $d$ then $l \le n_0\log_2 d/\log^2 n_0 \le 2n_0/\log n_0$, as needed.
\end{proof}

To conclude this section, we prove Theorem~\ref{thm:main} in the range $\om{ \frac{\log\log n}{n} } \le p \le \frac{\log^{10} n}{n}$ by observing that whp $G(n,p)$ contains no more than $o(n)$ short cycles.

\begin{LEMMA} \label{lem:shortcyc}
Let $p<\frac{\log^{10} n}{n}$. Then whp $G(n,p)$ contains no more than $\sqrt{n}$ cycles of length at most $\log\log n$.
\end{LEMMA}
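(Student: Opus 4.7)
The natural approach is a straightforward first moment argument. Let $X_k$ denote the number of cycles of length exactly $k$ in $G \sim G(n,p)$. Since the number of potential $k$-cycles in the complete graph $K_n$ is $\frac{n!}{2k(n-k)!} \le \frac{n^k}{2k}$, linearity of expectation gives
\[ \Exp(X_k) \le \frac{n^k}{2k}\cdot p^k = \frac{(np)^k}{2k}. \]

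Let $X = \sum_{k=3}^{\log\log n} X_k$ be the total number of cycles of length at most $\log\log n$. Plugging in the upper bound $np \le \log^{10} n$ from our hypothesis, we get
\[ \Exp(X) \le \sum_{k=3}^{\log\log n} \frac{(\log^{10} n)^k}{2k} \le \log\log n \cdot (\log^{10} n)^{\log\log n} = \log\log n \cdot e^{10(\log\log n)^2}. \]
Since $(\log\log n)^2 = o(\log n)$, this bound is of the form $n^{o(1)}$, and in particular $\Exp(X) = o(\sqrt{n})$.

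Markov's inequality then yields $\Prb(X \ge \sqrt{n}) \le \Exp(X)/\sqrt{n} = o(1)$, which is exactly what we need. There is no real obstacle here: the geometric series over $k$ is dominated by its last term, and the key point is that the upper endpoint $k = \log\log n$ is chosen small enough that even $(\log^{10} n)^{\log\log n}$ remains sub-polynomial in $n$, leaving plenty of slack against the threshold $\sqrt{n}$.
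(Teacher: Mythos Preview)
Your proof is correct and follows essentially the same approach as the paper: bound $\Exp(X_k)\le (np)^k$, sum over $k\le \log\log n$ to get a quantity of the form $e^{O((\log\log n)^2)}=n^{o(1)}$, and finish with Markov's inequality. The only differences are cosmetic (you keep the extra factor $1/(2k)$ in the cycle count, and the paper uses the cruder summation bound $(\log^{10} n)^{2\log\log n}$).
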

\begin{proof}
Let $X_k$ be the number of cycles of length $k$ in $G(n,p)$. The number of cycles in $K_n$ of length $k$ is at most $n^k$, and each cycle has probability $p^k$ of being included in $G(n,p)$, hence $\Exp(X_k)\le (np)^k \le (\log^{10} n)^k$.
So if $X=\sum_{k=3}^{\log\log n} X_k$ is the number of cycles of length at most $\log\log n$, then we clearly have
\[ \Exp(X)\le \sum_{k=3}^{\log\log n} \Exp(X_k)\le \sum_{k=3}^{\log\log n} (\log^{10} n)^k \le (\log^{10} n)^{2\log\log n} \]
using $\log^{10} n>2$.

Hence we can apply Markov's inequality to bound the probability that there are more than $\sqrt{n}$ short cycles:
\[ \Prb(X \ge \sqrt{n}) \le \frac{(\log^{10} n)^{2\log\log n}}{\sqrt{n}} = \exp\large\{20(\log\log n)^2 - (\log n)/2\large\} =o(1). \]
\end{proof}

\begin{COR} \label{lem:smalleul}
Let $ p < \frac{\log^{10} n}{n}$. Then whp any Euler subgraph $H$ of $G\sim G(n,p)$ can be decomposed into $o(n)$ cycles.
\end{COR}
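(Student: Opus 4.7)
The plan is to combine Corollary~\ref{thm:cycrem} and Lemma~\ref{lem:shortcyc}. Condition on $G\sim G(n,p)$ satisfying the conclusions of both, which happens whp since $p<\log^{10}n/n<n^{-1/6}$. Let $H$ be an arbitrary Euler subgraph of $G$ on $n_0\le n$ vertices. First I apply Corollary~\ref{thm:cycrem} to $H$ to decompose it as a union of at most $2n_0/\log n_0 = o(n)$ edge-disjoint cycles together with a residual graph $H'$ of average degree at most $84$. Since each extracted cycle is $2$-regular, its removal preserves the parity of every vertex degree, so $H'$ is still Euler; in particular it has at most $42 n_0\le 42 n$ edges and admits an edge-disjoint cycle decomposition.

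Next I take any such decomposition of $H'$ and split its cycles into \emph{short} ones (length at most $\log\log n$) and \emph{long} ones (length more than $\log\log n$). The short cycles are distinct cycles of $G$, so Lemma~\ref{lem:shortcyc} gives at most $\sqrt{n}$ of them. Each long cycle uses more than $\log\log n$ edges of $H'$, hence the number of long cycles is at most $42n/\log\log n = o(n)$. Adding the at most $o(n)$ cycles from Corollary~\ref{thm:cycrem} to these two contributions yields a total of $o(n)$ cycles, which is the desired decomposition.

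The only point that needs any verification is that $H'$ remains Eulerian after the cycles of Corollary~\ref{thm:cycrem} have been pulled off, but this is immediate from $2$-regularity of cycles. Beyond that, the argument is just a counting assembly of the two preceding results: Corollary~\ref{thm:cycrem} reduces to bounded average degree (so to a linear number of edges), and Lemma~\ref{lem:shortcyc} prevents too many short cycles from arising in the leftover Euler graph. I do not anticipate any real obstacle.
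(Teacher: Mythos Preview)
Your proof is correct and follows essentially the same approach as the paper's: apply Corollary~\ref{thm:cycrem} to reduce to an Euler graph of bounded average degree, then decompose that leftover arbitrarily into cycles and use Lemma~\ref{lem:shortcyc} to bound the short ones while the linear edge count bounds the long ones. The only additions you make are the explicit justification that $H'$ stays Euler and the remark that the short cycles in your decomposition are distinct cycles of $G$, both of which the paper leaves implicit.
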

\begin{proof}
Use Corollary~\ref{thm:cycrem} to remove $o(n)$ edge-disjoint cycles and end up with a graph $H_1$ of average degree at most 84. Note that $H_1$ is still an Euler graph, so we can break the edges of $G_1$ into cycles arbitrarily. We claim that the number of cycles we get is $o(n)$. Indeed, by Lemma~\ref{lem:shortcyc}, whp there are at most $\sqrt{n}=o(n)$ short cycles, i.e., of length at most $\log\log n$, while the number of long cycles can be bounded by the number of edges divided by $\log\log n$. Since $H_1$ contains a linear number of edges, the number of long cycles is $O(\frac{n}{\log\log n})=o(n)$ and we are done.
\end{proof}

Our theorem for small $p$ is then an immediate consequence of Lemma~\ref{lem:matching2} and Corollary~\ref{lem:smalleul}.

\begin{THM} \label{thm:smallp}
Let $\om{ \frac{\log\log n}{n} } < p < \frac{\log^{10} n}{n}$. Then whp $G\sim G(n,p)$ can be decomposed into $\frac{\odd(G)}{2}+o(n)$ cycles and edges.
\end{THM}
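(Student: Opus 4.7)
The plan is essentially to combine Lemma~\ref{lem:matching2} with Corollary~\ref{lem:smalleul}: the first lemma strips off the odd degrees cheaply, and the second disposes of the remaining Euler graph in a sublinear number of cycles.

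More precisely, I would proceed as follows. Let $G\sim G(n,p)$ and let $S$ denote the set of odd-degree vertices of $G$, so $|S|=\odd(G)$. Condition on $G$ satisfying both the conclusion of Lemma~\ref{lem:matching2} and the conclusion of Corollary~\ref{lem:smalleul}; a union bound shows this happens whp in the range $\om{\log\log n/n}<p<\log^{10}n/n$. First apply Lemma~\ref{lem:matching2} to obtain a set $E_0\subs E(G)$ of size $|S|/2+o(n)$ such that $G-E_0$ is Euler. Put each edge of $E_0$ into the decomposition as a single-edge piece; this contributes $\odd(G)/2+o(n)$ pieces.

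Next, set $H=G-E_0$, which is an Euler subgraph of $G$. Apply Corollary~\ref{lem:smalleul} to $H$ to decompose it into $o(n)$ edge-disjoint cycles, and add these cycles to the decomposition. The total number of cycles and edges used is
\[
\frac{\odd(G)}{2}+o(n)+o(n)=\frac{\odd(G)}{2}+o(n),
\]
which is exactly the bound claimed in the theorem.

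There is essentially no hard step here, since all the work has already been done: Lemma~\ref{lem:matching2} gives the odd-vertex correction and Corollary~\ref{lem:smalleul} handles the Euler remainder. The only thing one has to be mildly careful about is that Corollary~\ref{lem:smalleul} is applied to the specific Euler subgraph $H=G-E_0$; but that corollary is a whp-statement about \emph{all} Euler subgraphs of $G(n,p)$ simultaneously, so no additional probabilistic argument is needed beyond the single union bound mentioned above.
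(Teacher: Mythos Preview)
Your proof is correct and follows exactly the same approach as the paper, which simply states that the theorem is an immediate consequence of Lemma~\ref{lem:matching2} and Corollary~\ref{lem:smalleul}. Your additional remark that Corollary~\ref{lem:smalleul} applies to \emph{all} Euler subgraphs of $G$ simultaneously (so no further conditioning is needed once $E_0$ is chosen) is precisely the point that makes the combination work.
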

\qed

\section{The main ingredients for the dense case} \label{sec:lemma}

For larger $p$ we use a strong theorem by Broder, Frieze, Suen and Upfal \cite{BFSU} about the existence of edge-disjoint paths in random graphs connecting a prescribed set of vertex pairs. We need the following definition to state it: Suppose $S$ is a set of vertices in a graph $G$. We define the \emph{maximum neighborhood-ratio} function $r_G(S)$ to be $\max_{v\in V(G)} \frac{|N_G(v)\cap S|}{|N_G(v)|}$.

\begin{THM}[Broder-Frieze-Suen-Upfal] \label{thm:pathconnect}
Let $p = \om{ \frac{\log n}{n} }$. Then there are two constants $\alpha,\beta>0$ such that with probability at least $1-\frac{1}{n}$ the following holds in $G\sim G(n,p)$. For any set $F=\{(a_i,b_i) | a_i,b_i\in V, i=1,\ldots,k\}$ of at most $\alpha \frac{n\log (np)}{\log n}$ disjoint pairs in $G$ satisfying the property below, there are vertex-disjoint paths connecting $a_i$ to $b_i$:
\begin{itemize}
  \item There is no vertex $v$ which has more than a $\beta$-fraction of its neighborhood covered by the vertices in $F$. In other words, $r_G(S)\le \beta$, where $S$ is the set of vertices appearing in $F$.
\end{itemize}
\end{THM}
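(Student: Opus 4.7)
The plan is to deduce the linking property deterministically from strong expansion estimates that hold whp for $G(n,p)$, and then process the pairs one at a time using a BFS exploration. First I would perform a sprinkling step, splitting $G\sim G(n,p)$ into edge-independent subgraphs $G_1, G_2$ with $G_j\sim G(n,p')$ and $p'=\Theta(p)$; the graph $G_1$ will power the BFS stage, while $G_2$ is reserved to stitch the BFS fronts together.

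The key structural property to establish for $G_1$, whp, is a robust expansion estimate: for every $U\subseteq V$ of size at most $n/2$ and every ``forbidden'' set $T\subseteq V$ of size at most $n/50$,
\[
|N_{G_1}(U)\setminus(U\cup T)| \;\ge\; \min\bigl\{c|U|np,\; 0.4\,n\bigr\}
\]
for some absolute $c>0$. This follows from a standard Chernoff calculation together with a union bound over $(U,T)$. Its immediate corollary is that BFS in $G_1-T$ started from any vertex $v\notin T$ reaches at least $n/3$ vertices within depth $d=O(\log n/\log(np))$. The companion property I would need from $G_2$ is that, whp, any two disjoint vertex sets of size at least $n/3$ are joined by a $G_2$-edge.

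The construction itself is iterative. Process the pairs in arbitrary order; after $i-1$ pairs have been linked, let $W$ be the set of internal vertices of the previously built paths, so $|W|\le (i-1)(2d+1)=O(\alpha n)$, and set $T=S\cup W$. For $\alpha,\beta$ sufficiently small, $|T|\le n/50$, and the hypothesis $r_G(S)\le \beta$ ensures that $a_i$ still has $\Omega(np)$ neighbors in $G_1-T$, so BFS from $a_i$ in $G_1-T$ reaches a set $A\subseteq V\setminus T$ of size $\ge n/3$ within depth $d$, and similarly from $b_i$ we get a set $B$ of size $\ge n/3$. Either $A\cap B\ne\emptyset$, in which case the two BFS trees already contain an $a_i$-$b_i$ path, or else the companion property of $G_2$ supplies an edge between $A$ and $B$ that closes the gap. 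In either case we obtain an $a_i$-$b_i$ path of length at most $2d+1$, internally disjoint from $T$ and hence from all previously constructed paths.

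The main obstacle is to ensure that the structural properties of $G_1$ and $G_2$ hold uniformly over all configurations the iterative construction might query, so that the adversarial choice of $F$ cannot defeat them. For the expansion estimate of $G_1$, the failure probability for each fixed $(U,T)$ is at most $e^{-\Omega(|U|np)}$ by Chernoff, and the $4^n$-size union bound over pairs $(U,T)$ is absorbed because $p=\omega(\log n/n)$ makes the exponent beat $n$ once $|U|np$ grows past a constant multiple of $\log n$. The two-large-sets-share-an-edge property of $G_2$ is handled analogously. With both structural properties in hand, the iterative construction above is entirely deterministic, so the total failure probability is dominated by the two structural events and can be brought down to $\le 1/n$ by tuning the Chernoff constants. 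Balancing $\alpha,\beta$ against the expansion constants so that $|T|$ stays under the expansion threshold throughout the $k$ rounds is the delicate bookkeeping step.
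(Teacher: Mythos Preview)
The paper does not prove this theorem; it is quoted from \cite{BFSU} and used as a black box, so there is no ``paper's own proof'' to compare against. Let me assess your sketch on its own merits.

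The outline is in the right spirit---sprinkling, BFS expansion, stitching the two fronts with a reserved edge---but the central structural claim is false as stated, and the gap is exactly where the real difficulty of the theorem lies. Your expansion estimate
\[
|N_{G_1}(U)\setminus(U\cup T)|\ \ge\ \min\{c|U|np,\,0.4n\}
\]
cannot hold simultaneously for all $|U|\le n/2$ and all $|T|\le n/50$: take $|U|=1$ with $np\ll n$, so that $|N_{G_1}(u)|=\Theta(np)<n/50$, and let $T\supseteq N_{G_1}(u)$. Your own union-bound remark already signals this---the $4^n$ choices of $(U,T)$ are absorbed only once $|U|np$ exceeds a constant multiple of $n$, i.e.\ once $|U|\gtrsim 1/p$. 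That leaves the first $\Theta\bigl(\tfrac{\log(1/p)}{\log(np)}\bigr)$ BFS layers completely unprotected by the estimate.

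This is precisely the point where the iterative construction can fail. The hypothesis $r_G(S)\le\beta$ guards the first step of BFS against $S$, but it says nothing about $W$, the set of internal vertices of previously built paths---and $W$ is constructed adaptively from $G_1$ itself. With $|W|=\Theta(\alpha n)$ and $|N_{G_1}(a_i)|=\Theta(np)$, for any $p=o(1)$ the set $W$ can in principle contain the entire neighborhood of $a_i$ (and of the next several BFS levels), so the BFS from $a_i$ in $G_1-T$ may never get off the ground. The Broder--Frieze--Suen--Upfal argument handles this not by a blanket union bound over $T$, but by controlling throughout the construction how the used vertices distribute across neighborhoods---in effect maintaining an analogue of the $\beta$-fraction condition for $W$ as well as for $S$. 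Your sketch does not address this, and without it the inductive step is not justified.
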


We shall use the following statement to establish this property, so that we can apply Theorem~\ref{thm:pathconnect} in our coming proofs.

\begin{LEMMA} \label{lem:sparsify}
Let $M$ be a matching covering some vertices of $V$, and let $G\sim G(n,\frac{\log^3 n}{n})$ be a random graph on the same vertex set, where $G$ and $M$ are not necessarily independent. Then with probability $1-n^{-\om{1}}$ we can break $G$ into $\log n$ random graphs $G_i\sim G(n,\frac{\log^2 n}{n})$ and $M$ into $\log n$ submatchings $M_i$ of at most $\frac{n}{\log n}$ edges each, such that $r_{G_i}(S_i)\le \frac{4}{\sqrt{\log n}}$ for all $i=1,\ldots,\log n$, where $S_i$ is the set of endvertices of $M_i$.
\end{LEMMA}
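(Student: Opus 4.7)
The plan is to perform both splittings uniformly at random, using fresh randomness that is independent of $(G,M)$. Assign every edge of $M$ to one of the $\log n$ slots by independent uniform choices $B(\cdot)$, and do the same for every edge of $G$ via independent uniform choices $A(\cdot)$; set $M_i=\{e\in M:B(e)=i\}$ and $G_i=\{e\in G:A(e)=i\}$. A pair $\{u,v\}$ belongs to $G_i$ iff $uv\in G$ (probability $\log^3 n/n$) and $A(uv)=i$ (probability $1/\log n$); these events are independent across pairs, so $G_i\sim G(n,\log^2 n/n)$ as required. Since $|M|\le n/2$, each $|M_i|$ has mean at most $n/(2\log n)$, and Claim~\ref{lem:chernoff}(b) together with a union bound over $i$ gives $|M_i|\le n/\log n$ for all $i$ except with probability $n^{-\om{1}}$.

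To control $r_{G_i}(S_i)$, I would first condition on the super-polynomially likely event that every vertex of $G$ has degree $(1\pm o(1))\log^3 n$; by parts (b) and (d) of Claim~\ref{lem:chernoff}, whose tails here are $e^{-\Omega(\log^3 n)}=n^{-\om{1}}$, this fails with probability $n^{-\om{1}}$ after a union bound over vertices, and depends only on the marginal law of $G$. On this event, fix $v$ and $i$. Conditional on $G$, the degree $|N_{G_i}(v)|$ is $\Bin(|N_G(v)|,1/\log n)$ with mean $(1-o(1))\log^2 n$, and Claim~\ref{lem:chernoff}(c) gives $|N_{G_i}(v)|\ge (1-o(1))\log^2 n$ except with probability $n^{-\om{1}}$.

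For the numerator $|N_{G_i}(v)\cap S_i|$ I would group contributions by matching edges: for each $e=\{u_1,u_2\}\in M$ define $Y_e=\mathbf{1}[B(e)=i]\cdot\bigl(\mathbf{1}[u_1v\in G,\,A(u_1v)=i]+\mathbf{1}[u_2v\in G,\,A(u_2v)=i]\bigr)$. The $Y_e$ are then independent across $e\in M$ (they depend on disjoint pieces of the randomness $(A,B)$), each $Y_e\le 2$, and $|N_{G_i}(v)\cap S_i|=\sum_e Y_e$. Conditional on $G$, the mean of the sum is $|N_G(v)\cap V(M)|/\log^2 n\le 2\log n$. Since the target $k=2\log^{1.5} n$ exceeds this mean by a factor $\sqrt{\log n}\to\infty$, the standard Chernoff large-deviation bound $\Prb(X\ge k)\le (e\mu/k)^k$ gives failure probability at most $(e/\sqrt{\log n})^{2\log^{1.5}n}=\exp\bigl(-\Omega(\log^{1.5} n\cdot \log\log n)\bigr)=n^{-\om{1}}$. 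Combining the two estimates yields $r_{G_i}(S_i)\le 2\log^{1.5}n/\bigl((1-o(1))\log^2 n\bigr)\le 4/\sqrt{\log n}$, and a union bound over the $n\log n$ pairs $(v,i)$ preserves the overall probability $1-n^{-\om{1}}$.

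The main obstacle is the allowed correlation between $G$ and $M$: one cannot treat $S_i$ as independent of $G_i$ at the outset. The way around this is precisely the introduction of $(A,B)$ as fresh randomness independent of $(G,M)$. After conditioning on \emph{any} realization of $(G,M)$, the indicators appearing above are genuine independent Bernoullis, so Chernoff estimates apply unchanged; the joint distribution of $G$ and $M$ enters the argument only through the marginal-degree control of $G$, which itself is super-polynomially concentrated.
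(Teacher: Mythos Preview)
Your proof is correct and follows essentially the same approach as the paper: split both $G$ and $M$ uniformly at random using fresh randomness independent of $(G,M)$, use Chernoff to control the $|M_i|$ and the degrees, and then a further Chernoff tail to bound $|N_{G_i}(v)\cap S_i|$ for each $(v,i)$. The only difference is the order of conditioning: the paper first fixes the $A$-randomness to pin down $\frac{\log^2 n}{2}\le |N_{G_i}(v)|\le 2\log^2 n$, notes that at most $2\log^2 n$ edges of $M$ meet $N_{G_i}(v)$, and then uses only the $B$-randomness to show that at most $\log^{3/2} n$ of them land in $M_i$; you instead condition on the degrees of $G$ itself and use the $A$- and $B$-randomness jointly via the variables $Y_e$. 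One small point: since $Y_e\in\{0,1,2\}$, the multiplicative Chernoff bound should strictly be applied to $Y_e/2$, yielding exponent $k/2$ rather than $k$, but this does not affect the $n^{-\om{1}}$ conclusion.
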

\begin{proof}
First, we partition the edge set of $G$ into $\log n$ graphs $G_1, \ldots, G_{\log n}$ by choosing an index $i_e$ for each edge $e$ uniformly and independently at random, and placing $e$ in $G_{i_e}$. Then each $G_i$ has distribution $G(n,\frac{\log^2 n}{n})$. Then we can apply Claim~\ref{lem:chernoff}(b) and (d) to the degree of each vertex of each of the $G_{i}$'s, and use the union bound to see that all the $G_i$ have minimum degree at least $\frac{\log^2 n}{2}$ and maximum degree at most $2\log^2 n$ with probability $1-2n\log n\cdot e^{-\log^2 n/40} = 1-n^{-\om{1}}$.

Now break the $M$ into $\log n$ random matchings $M_1, \ldots, M_{\log n}$ similarly, by placing each edge $f\in M$ independently in $M_{i_f}$ where $i_f$ is a random index chosen uniformly. Since there are at most $n/2$ edges in $M$, another application of Claim~\ref{lem:chernoff}(b) gives that with probability $1-\log ne^{-n/20\log n}=1-n^{-\om{1}}$ each $M_i$ contains at most $\frac{n}{\log n}$ edges.

If $G_i$ has maximum degree at most $2\log^2 n$, then the neighborhood of an arbitrary vertex $v$ in $G_i$ may meet at most $2\log^2 n$ edges from $M$. The probability that at least $(\log n)^{3/2}$ of them are selected in $M_i$ is at most
\[ \binom{2\log^2 n}{\log^{3/2} n}\frac{1}{(\log n)^{\log^{3/2} n}} \le \left( \frac{2e\log^2n}{\log^{3/2}\cdot \log n} \right)^{\log^{3/2} n} \le \left( \frac{2e}{\log^{1/2} n} \right)^{\log^{3/2} n} \le e^{-\log n \cdot \log\log n}. \]
So taking the union bound over all vertices $v$ and indices $i$ gives that with probability $1-n^{-\om{1}}$, the neighborhood of any $v$ in any $G_i$ meets at most $\log^{3/2} n$ of the edges in $M_i$, and thus it contains at most $2\log^{3/2} n$ vertices from $S_i$. Since all neighborhoods have size at least $\frac{\log^2 n}{2}$, we get that $r_{G_i}(S_i)\le \frac{4}{\sqrt{\log n}}$ for all $i$.
\end{proof}

The next theorem is the main tool on our way to the proof of Theorem~\ref{thm:main}.

\begin{THM} \label{thm:decomp}
Let $\frac{2\log^5 n}{n} \le p \le n^{-1/6}$ and $G\sim G(n,p)$. Suppose $G$ is randomly split into $G'$ and $G''$ by placing its edges independently into $G'$ with probability $p'=\frac{2\log^5 n}{np}$ and into $G''$ otherwise. Then whp for any subgraph $H$ of $G''$ such that $G'\cup H$ is Euler, $G'\cup H$ can be decomposed into $o(n)$ cycles.
\end{THM}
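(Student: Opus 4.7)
The plan proceeds in two stages. Since $G' \cup H \subseteq G \sim G(n,p)$ with $p \le n^{-1/6}$, Corollary~\ref{thm:cycrem} applies directly, decomposing $G' \cup H$ into at most $2n/\log n = o(n)$ edge-disjoint cycles plus an Eulerian remainder $R$ of average degree at most $84$, so $|E(R)| \le 42n$. (Each stripped cycle is Eulerian, so $R$ remains Eulerian.) The remaining task is to decompose $R$ into $o(n)$ cycles.

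The heart of the argument is this second stage. A priori $R$ could be a union of $\Theta(n)$ very short Eulerian components---for instance $\Theta(n)$ disjoint triangles, which the abundance of short cycles in $G$ allows---for which any cycle-by-cycle decomposition would use $\Theta(n)$ cycles, too many. To avoid this, the plan is to exploit the random structure of $G'$: since $G' \sim G(n, 2\log^5 n/n)$ is well above the connectivity and Hamiltonicity thresholds and satisfies the expansion properties of Lemma~\ref{lem:prep}, it can serve as a pool of stitching edges. Concretely, one uses Lemma~\ref{lem:sparsify} to partition $G'$ into a few random sub-slices and matching pieces whose neighborhood-ratios are controlled, and then invokes Theorem~\ref{thm:pathconnect} to produce vertex-disjoint paths through the slices that merge the short components of $R$ into a small number of long cycles.

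The main obstacles are (i) maintaining the Eulerian property globally while edges of $G'$ are repurposed between the cycle-stripping and the merging step, and (ii) verifying the neighborhood-ratio hypothesis of Theorem~\ref{thm:pathconnect} for the endpoint pairs that arise from this merging---which is precisely the purpose of Lemma~\ref{lem:sparsify}. Once the merging is complete, what remains is a sparse Eulerian subgraph of density at most $2\log^5 n/n < \log^{10} n/n$, and Corollary~\ref{lem:smalleul} finishes the decomposition with $o(n)$ further cycles.
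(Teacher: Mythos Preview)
There is a genuine gap in the first step. You apply Corollary~\ref{thm:cycrem} to $G'\cup H$ and only afterwards try to use $G'$ as a ``pool of stitching edges'' for Theorem~\ref{thm:pathconnect}. But the cycles stripped in stage one may (and typically will) contain edges of $G'$: what survives of $G'$ inside $R$ is $G'\cap R$, a graph with at most $42n$ edges whose structure depends on $H$ and on the stripping procedure. It is no longer a fresh sample from $G(n,2\log^5 n/n)$, so you cannot partition it into random sub-slices to which Theorem~\ref{thm:pathconnect} applies; and you cannot use the original $G'$ either, since some of its edges are already committed to the stripped cycles and a decomposition may not reuse them. Your obstacle~(i) alludes to a tension here, but frames it as a parity issue; the actual obstruction is the loss of randomness.

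The paper avoids this by applying Corollary~\ref{thm:cycrem} only to $H\subseteq G''$, obtaining $H=(\text{cycles})\cup H_0$ with $H_0$ of average degree at most $84$, while $G'$ is left completely untouched. Then $G'\cup H_0$ is Euler, $G'$ is still an honest $G(n,2\log^5 n/n)$, and one can split it into $\Theta(\log^3 n)$ independent random slices. The leftover $H_0$ is broken into $O(\log^3 n)$ matchings (via a high/low degree split $V_0\cup V_1$ and an auxiliary multigraph for the crossing edges), and each matching is closed into a cycle by Theorem~\ref{thm:pathconnect} inside its own slice of $G'$; Lemma~\ref{lem:sparsify} is what makes the neighborhood-ratio hypothesis hold. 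After this, what remains is an Euler subgraph of $G'$, and Corollary~\ref{lem:smalleul} finishes. Your outline has the right ingredients but applies them in the wrong order; the fix is precisely to strip cycles only from the $G''$-part so that $G'$ stays random.
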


\begin{proof}
Note that, although $G'$ and $G''$ are far from being independent, $G'$ on its own has distribution $G(n,p'p)=G(n,\frac{2\log^5 n}{n})$ and $G''$ has distribution $G(n,(1-p')p)$ where $(1-p')p < n^{-1/6}$. It is easy to see from Claim~\ref{lem:chernoff} that whp $G$ has maximum degree at most $2n^{5/6}$.

By Corollary~\ref{thm:cycrem}, whp any $H\subs G''$ can be decomposed into $o(n)$ cycles and a graph $H_0$ of average degree at most $84$. Therefore it is enough for us to show that whp $G'$ satisfies the following: for any $H_0$ containing at most $42n$ edges such that $G'\cup H_0$ is Euler, $G'\cup H_0$ is an edge-disjoint union of $o(n)$ cycles. Our plan is to break $H_0$ into few matchings and then to use Theorem~\ref{thm:pathconnect} on random subgraphs of $G'$ to connect them into cycles.

So define $V_0\subs V$ to be the set of vertices of degree at least $\frac{\log^2 n}{2}$ in $H_0$, and let $V_1=V-V_0$ be the rest. Note that $|V_0|=O(\frac{n}{\log^2 n})$. We break into matchings in two rounds: first we take care of the edges spanned by $V_1$, then the ones crossing between $V_0$ and $V_1$. Let us split $G'$ into two random graphs $G_1,G_2 \sim G(n,\frac{\log^5 n}{n})$ by placing each edge of $G'$ independently in one of them with probability $1/2$.

Now let us consider the subgraph of $H_0$ spanned by $V_1$: the maximum degree is at most $\frac{\log^2 n}{2} - 1$, so we can break the edge set into $\log^2 n$ matchings $M_1 , \ldots , M_{\log^2 n}$. To find the cycles, we also split $G_1$ into $\log^2 n$ parts $G_{1,1}, \ldots, G_{1,\log^2 n}$ so that each $G_{1,i}$ has distribution $G(n,\frac{\log^3 n}{n})$. We can use Lemma~\ref{lem:sparsify} to further break each $M_i$ and $G_{1,i}$ into $\log n$ parts $M_{i,j}$ and $G_{1,i,j}$, respectively, so that whp each $M_{i,j}$ contains $O(n/\log n)$ edges, and the endvertices of $M_{i,j}$ only cover a $o(1)$-fraction of the neighborhood of any vertex in $G_{1,i,j}$.

We create the cycles as follows: if $M_{i,j}$ consists of the edges $v_1v'_1, \ldots, v_kv'_k$, then we choose the corresponding set of pairs to be $F_{1,i,j}=\{(v'_1,v_2), (v'_2,v_3), \ldots, (v'_k,v_1)\}$. Here the above properties of $M_{i,j}$ and $G_{1,i,j}$ ensure that we can apply Theorem~\ref{thm:pathconnect}, and with probability at least $1-\frac{1}{n}$ the matching can be closed into a cycle. Hence with probability $1-\frac{\log^3 n}{n}=1-o(1)$ all the $M_{i,j}$'s can be covered by $\log^3 n$ cycles altogether.

Let us turn to the edges of $H_0$ between $V_0$ and $V_1$, and define the following auxiliary multigraph $G_a$ on $V_1$: for each $v\in V_0$, pair up its neighbors in $V_1$ (except maybe one vertex, if the neighborhood has odd size) and let $E_v$ be the set of edges -- a matching -- corresponding to this pairing. Define the edge set of $G_a$ to be the disjoint union of the $E_v$ for $v\in V_0$. The idea is that an edge $ww' \in E_v$ corresponds to the path $wvw'$, so we want to find cycles covering the edges in $G_a$ and then lead them through the original $V_0-V_1$ edges.

By the definition of $V_1$, the maximum degree in $G_a$ is at most $\frac{\log^2 n}{2}-1$, so the edge set $\cup_{v\in V_0} E_v$ can be split into $\log^2 n$ matchings $N_1, \ldots, N_{\log^2 n}$. Now it is time to break $G_2$ into $\log^2 n$ random subgraphs $G_{2,i}$ of distribution $G(n,\frac{\log^3 n}{n})$ each. Once again, we use Lemma~\ref{lem:sparsify} to prepare for the cycle cover by splitting each of the $N_i$ and $G_{2,i}$ into $\log n$ parts $N_{i,j}$ and $G_{2,i,j}$. When we define the set of pairs $F_{2,i,j}$, we need to be a little bit careful: we must make sure that no cycle contains more than one edge from any given $E_v$. This way the cycles do not become self-intersecting after switching the edges from $E_v$ back to the corresponding paths through $v$. Since the maximum degree of $G$, and hence the cardinality of $E_v$, is at most $2n^{5/6}$, we may achieve this by using at most $2n^{5/6}$ cycles per matching. Indeed, split $N_{i,j}$ into at most $2n^{5/6}$ subsets $N_{i,j,k}$ so that none of the $N_{i,j,k}$ contains more than one edge from the same $E_v$ (this can be done greedily). Then define the sets of pairs $F_{2,i,j,k}$ for $k=1,\dots, 2n^{5/6}$ to close $N_{i,j,k}$ into a cycle the same way as before, and take $F_{2,i,j}=\cup_{k=1}^{2n^{5/6}} F_{2,i,j,k}$.

As above, all conditions of Theorem~\ref{thm:pathconnect} are satisfied when we use $G_{2,i,j}$ to find the paths corresponding to $F_{2,i,j}$ that close $N_{i,j}$ into cycles, and since the error probabilities were all $O(\frac{1}{n})$, whp we can simultaneously do so for all $i$ and $j$. We have $\log^3 n$ matchings, so in total we get $2n^{5/6}\log^3 n=o(n)$ edge-disjoint cycles that cover all but $o(n)$ edges of $H_0$ between $V_0$ and $V_1$ (missing at most one incident edge for each $v\in V_0$).

Finally, we apply Corollary~\ref{thm:cycrem} on the subgraph of $H_0$ induced by $V_0$ to see that the edges spanned by $V_0$ can be partitioned into $O(n/\log^3 n)$ cycles and $O(n/\log^2 n)=o(n)$ edges (recall that $|V_0|=O(n/\log^2 n)$).

So far we have found $o(n)$ edge-disjoint cycles in $G'\cup H$. Once we remove them, we get an Euler graph containing only $o(n)$ edges from $H$. So we can find $o(n)$ edge-disjoint cycles covering all of them and remove these cycles, as well, to get an Euler subgraph of $G' \sim G(n,\frac{\log^5 n}{n})$. Now Corollary~\ref{lem:smalleul} shows that we can partition the remaining graph into $o(n)$ cycles, concluding our proof.
\end{proof}

\section{Cycle-edge decompositions in dense random graphs} \label{sec:dense}

At last, we are ready to prove Theorem~\ref{thm:main} in the denser settings. The case $p\le n^{-1/6}$ is fairly straightforward from our previous results, we just need to be a little bit careful.

\begin{THM} \label{thm:mediump}
Let $\frac{\log^6 n}{n}\le p \le n^{-1/6}$. Then whp $G\sim G(n,p)$ can be decomposed into $\frac{\odd(G)}{2}+o(n)$ cycles and edges.
\end{THM}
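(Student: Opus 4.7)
The plan is to combine the two tools developed in the previous sections: Lemma~\ref{lem:matching} will cover the odd-degree vertices with roughly $\odd(G)/2$ edges, and Theorem~\ref{thm:decomp} will then chop the remaining Euler graph into only $o(n)$ cycles. The minor obstacle is a mismatch of ranges: Lemma~\ref{lem:matching} needs a sparse random graph of density at most $\log^{10} n/n$, while Theorem~\ref{thm:decomp} demands a sparse ``reserve'' $G'\sim G(n, 2\log^5 n/n)$ inside $G$. The fix is to carve out \emph{two} sparse random subgraphs from $G$.

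Concretely, I would place each edge of $G$ independently into one of three buckets $G_1, G_2, G_3$ with probabilities $p_1=\frac{2\log^5 n}{np}$, $p_2=\frac{\log^4 n}{np}$, $p_3=1-p_1-p_2$. Across the entire range $\log^6 n/n\le p\le n^{-1/6}$ both $p_1$ and $p_2$ are $o(1)$, so these are legitimate probabilities and $p_3>0$. Marginally, $G_1\sim G(n, \frac{2\log^5 n}{n})$ lies in the ``$G'$''-range of Theorem~\ref{thm:decomp}, while $G_2\sim G(n,\frac{\log^4 n}{n})$ lies in the range of Lemma~\ref{lem:matching}. Crucially, the coarser split $(G_1,\,G_2\cup G_3)$ is exactly the $(G',G'')$-split demanded by Theorem~\ref{thm:decomp}, since each edge lies in $G_1$ independently with probability $p_1$ and in $G_2\cup G_3$ otherwise.

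Now apply Lemma~\ref{lem:matching} to $G_2$ taking $S:=\{v:\deg_G(v)\text{ is odd}\}$, which has even size by the handshake lemma. Since the lemma holds whp \emph{simultaneously for all even sets $S$}, this is legitimate even though $S$ depends on edges outside $G_2$. The lemma produces $E_0\subseteq G_2\subseteq G_2\cup G_3$ of size $\frac{\odd(G)}{2}+o(n)$ whose odd-degree vertex set is precisely $S$, so $G-E_0$ is Euler. Setting $H:=(G_2\cup G_3)-E_0\subseteq G_2\cup G_3$, we have $G_1\cup H=G-E_0$, and Theorem~\ref{thm:decomp} decomposes it into $o(n)$ cycles. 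Combined with the $|E_0|=\frac{\odd(G)}{2}+o(n)$ single edges, this partitions $G$ into $\frac{\odd(G)}{2}+o(n)$ cycles and edges. The only thing requiring genuine attention is the compatibility of probability ranges throughout the interval $[\log^6 n/n, n^{-1/6}]$, which the choice of $p_1, p_2$ above handles uniformly; no new combinatorial difficulty appears beyond what Theorems~\ref{thm:decomp} and Lemma~\ref{lem:matching} already absorbed.
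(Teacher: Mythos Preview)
Your proof is correct and follows essentially the same approach as the paper: a three-way random split $G=G_1\cup G_2\cup G_3$, with $G_1$ playing the role of $G'$ in Theorem~\ref{thm:decomp}, $G_2$ feeding Lemma~\ref{lem:matching} to produce $E_0$, and $H=(G_2\cup G_3)-E_0$ handed to Theorem~\ref{thm:decomp}. The only difference is cosmetic---the paper takes $p_2=\frac{\log^2 n}{np}$ rather than your $\frac{\log^4 n}{np}$---and your explicit remark that Lemma~\ref{lem:matching} holds simultaneously for all even $S$ (so the dependence of $S$ on $G\setminus G_2$ is harmless) is a point the paper leaves implicit.
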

\begin{proof}
We split $G$ into the union of three disjoint random graphs $G_1, G_2$ and $G_3$ by putting each edge $e\in E(G)$ independently into one of the graphs. With probability $p_1=\frac{2\log^5 n}{np}$ we place $e$ into $G_1$, with probability $p_2=\frac{\log^2 n}{np}$ we place it into $G_2$, and with probability $1-p_1-p_2$ we place it into $G_3$. This way $G_1\sim G(n,\frac{2\log^5 n}{n})$ and $G_2\sim G(n,\frac{\log^2 n}{n})$.

Now let $S$ be the set of odd-degree vertices in $G$. Applying Lemma~\ref{lem:matching} to $S$ in $G_2$ gives a set $E_0$ of $|S|/2 + o(n)$ edges in $G_2$ such that $G-E_0$ is Euler. Taking $H$ to be the subgraph $G_2\cup G_3 - E_0$ of $G''=G_2\cup G_3$ and setting $G'=G_1$, we can apply Theorem~\ref{thm:decomp} to split the edge set of $G-E_0$ into $o(n)$ cycles. The theorem follows.
\end{proof}

To get a tight result for larger $p$, we must remove cycles containing nearly all vertices. A recent result by Knox, K\"uhn and Osthus \cite{KKO} helps us to find many edge-disjoint Hamilton cycles in random graphs.

\begin{THM}[Knox-K\"uhn-Osthus] \label{thm:hamilton}
Let $\frac{\log^{50} n}{n} \le p \le 1-n^{-1/5}$ and $G\sim G(n,p)$. Then whp $G$ contains $\lfloor \delta(G)/2 \rfloor$ edge-disjoint Hamilton cycles.
\end{THM}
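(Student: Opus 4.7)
The plan is to construct the $\lfloor \delta(G)/2 \rfloor$ edge-disjoint Hamilton cycles by combining a two-round exposure, an iterative P\'osa rotation-extension phase, and a small absorbing gadget for the endgame. First, by Claim~\ref{lem:chernoff}, whp $\delta(G) = (1-o(1))np$ and $G(n,p)$ enjoys strong expansion properties in the given range: every vertex set $S$ with $|S|\le n/4$ satisfies $|N_G(S)|\ge 2|S|$, and every pair of linear-sized sets is joined by many edges. Because $\lfloor \delta(G)/2 \rfloor$ is the information-theoretic maximum (each Hamilton cycle uses two edges at every vertex), the target is tight, so almost no edges can be wasted.

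Next, split $G = G' \cup G''$ with $G' \sim G(n,p_1)$ and $G'' \sim G(n,p_2)$ where $p_2 = p/\log^{10} n$ is tiny, reserving $G''$ for the endgame. In the main phase, iteratively extract Hamilton cycles $H_1, H_2, \ldots$ from $G'$ via P\'osa's rotation-extension. Using Lemma~\ref{lem:posa}, a leftover graph that is not yet Hamiltonian must contain a small set $T$ with $|N(T)| \le 2|T|$; one rules this out against the random structure by a standard union bound, provided the leftover graph still has minimum degree at least, say, $\log^{40} n$. Inductively, the leftover retains good expansion since each removed Hamilton cycle decreases all degrees by exactly two. This main phase should produce $\lfloor \delta(G)/2 \rfloor - o(\delta(G))$ edge-disjoint Hamilton cycles.

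The endgame, producing the last $o(\delta(G))$ Hamilton cycles when the leftover graph has essentially minimum degree zero, is the main obstacle. Here I would use the reserved edges of $G''$ as an absorbing structure: at the outset, reserve for each vertex a collection of short ``swap paths'' in $G''$ whose middle edge can later be rerouted so that any near-Hamilton path or $2$-factor in the leftover can be corrected into a Hamilton cycle by substituting edges from $G''$. The number of $G''$-edges consumed per correction can be kept $o(n/\log n)$ per Hamilton cycle, which comfortably fits within the $\Theta(np_2) = \Theta(np/\log^{10} n)$ edges available at each vertex of $G''$.

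The trickiest technical issue is the conditioning created by iteratively peeling Hamilton cycles: each extraction reveals information about the remaining edges of $G'$, which undermines naive independence arguments. To handle this cleanly I would work in the permutation model, exposing the edges of $G'$ in a uniformly random order and running each rotation-extension step only on the edges revealed so far; alternatively, further split $G'$ into $\omega(1)$ independent subgraphs $G'_1, \ldots, G'_k$ and extract only a bounded number of Hamilton cycles per $G'_i$. Combined with the absorbing step at the end, this yields exactly $\lfloor \delta(G)/2 \rfloor$ edge-disjoint Hamilton cycles whp.
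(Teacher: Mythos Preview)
The paper does not prove this theorem at all: it is quoted verbatim as a result of Knox, K\"uhn and Osthus \cite{KKO} and used as a black box in the proof of Theorem~\ref{thm:largep}. There is therefore no ``paper's own proof'' to compare your proposal against.

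As for the proposal on its own merits, the outline has the right flavour (multi-round exposure, iterated P\'osa rotation--extension, an absorbing reserve for the endgame), and indeed the actual proof in \cite{KKO} uses related machinery. But your sketch glosses over exactly the step that makes this theorem hard. Reaching $(1-o(1))\delta(G)/2$ edge-disjoint Hamilton cycles by repeated rotation--extension is a known and comparatively routine argument; hitting $\lfloor \delta(G)/2\rfloor$ on the nose is a different matter. At any vertex $v$ of minimum degree, \emph{every} edge incident to $v$ must lie in one of the Hamilton cycles, so nothing at $v$ can be wasted---including the edges of your reserved graph $G''$. Your absorbing step (``swap paths in $G''$ whose middle edge can later be rerouted'') does not address this: consuming a $G''$-edge at $v$ is no different from consuming a $G'$-edge at $v$, and you give no mechanism that forces the last few Hamilton cycles to pass through the \emph{specific} remaining edges at each low-degree vertex. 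The genuine proof in \cite{KKO} handles this via a considerably more elaborate robust-expansion and absorbing scheme than anything indicated here, and the paper under review simply invokes that result rather than reproving it.
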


Let us point out that we do not actually need such a strong result: $\delta(G)/2 - n^{\eps}$ disjoint Hamilton cycles would also suffice for our purposes.

\begin{THM} \label{thm:largep}
Let $p\ge n^{-1/6}$. Then whp $G\sim G(n,p)$ can be decomposed into $\frac{\odd(G)}{2}+\frac{np}{2}+o(n)$ edges.
\end{THM}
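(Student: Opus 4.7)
\emph{Plan.} Randomly partition the edges of $G$ into four edge-disjoint subgraphs $G = G_1 \cup G_2 \cup G_3 \cup G_4$ by assigning each edge independently to $G_i$ with probability $q_i$, calibrated so that, marginally, $G_1 \sim G(n, 2\log^5 n/n)$, $G_3 \sim G(n, \log^2 n/n)$, $G_2 \sim G(n, p_2)$ for some $p_2 = \Theta(\log^{7/2} n \sqrt{p/n})$, and $G_4 \sim G(n, p_4)$ with $p_4 = (1-o(1))p$. The density $p_2$ is chosen to be simultaneously large enough for $G_2$ to carry the $m = O(\sqrt{np\log n})$ BFSU-connectors used in Step~3 and small enough that $G_1 \cup G_2 \sim G(n, p_{12})$ lands in the intermediate range $p_{12}\in[2\log^5 n/n, n^{-1/6}]$ with internal split ratio $q_1/(q_1+q_2)$ equal to the $p'=2\log^5 n/(np_{12})$ prescribed by Theorem~\ref{thm:decomp}.

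\emph{Step 1 (parity) and Step 2 (Hamilton packing).} Let $S$ be the set of odd-degree vertices of $G$ (of even cardinality). Lemma~\ref{lem:matching} applied in $G_3$ to the set $S$ produces $E_0 \subs G_3$ of size $\odd(G)/2 + o(n)$ with $G - E_0$ Euler. Next apply Theorem~\ref{thm:hamilton} to $G_4$ (which for $n^{-1/6}\le p \le 1-n^{-1/5}$ lies in its range): this yields $k := \lfloor \delta(G_4)/2 \rfloor$ edge-disjoint Hamilton cycles in $G_4$. Chernoff bounds give $\Delta(G_4) - \delta(G_4) = O(\sqrt{np\log n})$ whp, so $k = np/2 + o(n)$ and the residual $G_4'$ satisfies $\Delta(G_4') \le \Delta(G_4) - 2k = O(\sqrt{np\log n})$.

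\emph{Step 3 (matchings and closings).} Consider $R := (G_3 \setminus E_0) \cup G_4'$. Since $\Delta(G_3) \le 2\log^2 n$ whp, $\Delta(R) = O(\sqrt{np\log n})$, so Vizing's theorem writes $R$ as an edge-disjoint union of $m = O(\sqrt{np\log n})$ matchings $M_1, \ldots, M_m$, each of size at most $n/2$. Uniformly random-split $G_2$ into subgraphs $G_{2,1}, \ldots, G_{2,m} \sim G(n, \log^3 n/n)$. For each $i$, apply Lemma~\ref{lem:sparsify} and Theorem~\ref{thm:pathconnect} to $M_i$ and $G_{2,i}$ exactly as in the proof of Theorem~\ref{thm:decomp}: $M_i$ breaks into $\log n$ sub-matchings of size $O(n/\log n)$, each closed into a cycle by vertex-disjoint $G_{2,i}$-paths. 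Across all $i$ this produces $O(\sqrt{np\log n}\cdot \log n) = o(n)$ edge-disjoint cycles covering $R$.

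\emph{Step 4 (cleanup) and main obstacle.} What remains is $G_1 \cup H$, where $H \subs G_2$ is the (random) subset of $G_2$-edges not used in the path-closings. Since we have only deleted $E_0$ and a family of cycles from the Euler graph $G - E_0$, the remainder is Euler. By our calibration of $q_1$ and $q_2$, the pair $(G_1, G_2)$ is precisely a random split of $G_1 \cup G_2$ of the type required by Theorem~\ref{thm:decomp}, so that theorem decomposes $G_1 \cup H$ into $o(n)$ additional cycles. Summing up: $\odd(G)/2 + o(n)$ edges from $E_0$ plus $np/2 + o(n) + o(n)$ cycles from Steps~2--4, giving the claimed total. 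The main obstacle is Step~2: one must check that $G_4$'s near-regularity $\Delta(G_4) - \delta(G_4) = O(\sqrt{np\log n})$ is inherited by $G_4'$, because only then does Vizing produce $O(\sqrt{np\log n})$ matchings and Step~3 produce $o(n)$ cycles rather than $\Omega(n)$; a secondary difficulty is balancing the $q_i$ so that $G_2$ is simultaneously dense enough for the BFSU step and sparse enough to keep $G_1 \cup G_2$ in Theorem~\ref{thm:decomp}'s range, with $p_4$ staying inside the hypothesis of Theorem~\ref{thm:hamilton}.
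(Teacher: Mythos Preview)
Your proof follows the same four-step architecture as the paper's: random four-way split, parity fix via Lemma~\ref{lem:matching} in $G_3$, Hamilton packing in $G_4$ via Theorem~\ref{thm:hamilton}, matchings-into-cycles on the residual via Lemma~\ref{lem:sparsify} and Theorem~\ref{thm:pathconnect} using pieces of $G_2$, and cleanup via Theorem~\ref{thm:decomp} on $G_1\cup G_2$. The only substantive difference is in the parameters. The paper takes a uniform deviation window of $n^{3/5}$ for the degrees in $G_3$ and $G_4$, sets the marginal density of $G_2$ to $n^{-1/5}\log^3 n$, and breaks the residual $H_0$ (of maximum degree at most $4n^{3/5}$) into $n^{4/5}$ matchings. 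You instead use the sharp Chernoff window $\Delta(G_4)-\delta(G_4)=O(\sqrt{np\log n})$ and a $p$-adaptive $p_2=\Theta(\log^{7/2}n\sqrt{p/n})$, yielding only $O(\sqrt{np\log n})$ matchings. Your bookkeeping is somewhat tighter in this respect, but the underlying idea is identical.

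There is, however, one genuine gap in your parameter choice, which you flag as a ``secondary difficulty'' but do not actually resolve. Theorem~\ref{thm:hamilton} requires the edge probability of $G_4$ to be at most $1-n^{-1/5}$, and the statement you are proving must cover all $p\ge n^{-1/6}$, including $p$ arbitrarily close to $1$. With your $p_2=\Theta(\log^{7/2}n\sqrt{p/n})=o(n^{-1/5})$ (and $G_1,G_3$ of polylogarithmic density), the marginal density of $G_4$ is $p-o(n^{-1/5})$, which for $p>1-n^{-1/5}$ lies above the upper threshold of Theorem~\ref{thm:hamilton}. The paper sidesteps this precisely by choosing $G_2$ dense enough: since its $G_2$ has marginal density $n^{-1/5}\log^3 n>n^{-1/5}$, the density of $G_4$ is automatically below $1-n^{-1/5}$ for every $p\le 1$. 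The fix on your side is easy---raise $p_2$ to, say, $n^{-1/5}\log^3 n$ (this still keeps $p_{12}\le n^{-1/6}$, still admits a split into enough $G(n,\log^3 n/n)$ pieces to absorb your $O(\sqrt{n\log n})$ matchings, and forces $pp_4<1-n^{-1/5}$)---but as written your argument does not cover the full range of $p$.
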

\begin{proof}
Similarly to Theorem~\ref{thm:mediump}, we partition $G$ into the union of four disjoint random graphs $G_1, G_2, G_3$ and $G_4$ by assigning each edge of $G$ independently to $G_1$ with probability $p_1=\frac{2\log^5 n}{np}$, to $G_2$ with probability $p_2=\frac{n^{4/5}\log^3 n}{np}$, to $G_3$ with probability $p_3=\frac{\log^2 n}{np}$ and to $G_4$ otherwise (with probability $p_4=1-p_1-p_2-p_3=1-o(1)$). It is easy to see from the Chernoff bound (Claim~\ref{lem:chernoff}(a)) that whp the maximum degree of $G_3$ is at most $npp_3+n^{3/5}\le 2n^{3/5}$, and the maximum degree of $G_4$ is at most $npp_4+n^{3/5}$. Let us assume this is the case.

Let $S$ be the set of odd-degree vertices in $G$. Now as before, we use Lemma~\ref{lem:matching} to find a set $E_0$ of $\frac{|S|}{2}+o(n)$ edges in $G_3$ such that $G-E_0$ is Euler. Notice that $G_4\sim G(n,pp_4)$, where $pp_4=p(1-o(1))$, so whp $G_4$ has minimum degree at least $npp_4-n^{3/5}$ by Claim~\ref{lem:chernoff}(c). Also, $pp_4<1-n^{-1/5}$ (because $pp_2>n^{-1/5}$), hence we can apply Theorem~\ref{thm:hamilton} and find $\frac{npp_4-n^{3/5}}{2}$ edge-disjoint Hamilton cycles in it. Let $H_0$ be the graph obtained from $G_3\cup G_4 - E_0$ by removing these cycles. Then the maximum degree of $H_0$ is at most $4n^{3/5}$, hence we can break the edge set of $H_0$ into $n^{4/5}$ matchings $M_i$.

We want to use Theorem~\ref{thm:pathconnect} to close the $M_i$'s into cycles, so let us split $G_2$ into $n^{4/5}$ random graphs $G'_i \sim G(n,\frac{\log^3 n}{n})$ uniformly. By Lemma~\ref{lem:sparsify} we can further partition each $M_i$ and $G'_i$, with probability $1-n^{-\om{1}}$, into $\log n$ matchings and graphs, $M_{i,j}$ and $G'_{i,j}$ in such a way that we can apply Theorem~\ref{thm:pathconnect} on $G'_{i,j}$ for any pairing $F_{i,j}$ on the vertices of $M_{i,j}$. Choose $F_{i,j}$, as before, so that the resulting paths together with $M_{i,j}$ form a cycle. Then with probability at least $1-\frac{1}{n}$ the theorem produces the required cycle, so whp all $n^{4/5}\log n$ cycles exist simultaneously.

This way we find $n^{4/5}\log n=o(n)$ edge-disjoint cycles covering $H_0$ and some edges in $G_2$. Let $H$ be the graph containing the unused edges of $G_2$. Then $G_1\cup H$ is Euler, and we can apply Theorem~\ref{thm:decomp} with the host graph $G_1 \cup G_2$ from distribution $G(n,p(p_1+p_2))$, and the partition $G'=G_1$, $G''=G_2$. This gives us a decomposition of $G'\cup H$ into $o(n)$ cycles whp, completing our proof.
\end{proof}

\section{Concluding remarks}

The above proof settles the question for $p=\om{\frac{\log\log n}{n}}$, but it would be nice to have a result for the whole probability range. The bottleneck in our proof is Lemma~\ref{lem:matching2}, where we obtain a small edge set $E_0$ such that $G(n,p)-E_0$ is Euler. We believe that similar ideas can be applied to prove this lemma for even smaller values of $p$ if one puts more effort into finding short paths between vertices not covered by the matching. In any case, it seems that the asymptotics of the optimum is defined by the smallest such $E_0$ for any $p\le \log n/n$, so a complete solution to the problem would first need to describe this minimum in the whole range.

Another direction might be to further explore the error term of our theorem. One clear obstacle to an improvement using our methods is Corollary~\ref{lem:smalleul}. While we could slightly improve it to give a $O(n/\log n)$ bound, showing that the error term is significantly smaller would need more ideas.

\bigskip

\noindent{\bf Acknowledgement.}
The authors would like to thank the anonymous referee for helpful remarks. They are also grateful to D. Conlon and J. Fox for stimulating discussions of the problem treated in this paper.


\begin{thebibliography}{99}

\bibitem{ASBOOK}
N.~Alon and J.~H.~Spencer,
\newblock{\bf The probabilistic method}, 3rd ed.,
\newblock{Wiley}, 2008.

\bibitem{BBOOK}
B.~Bollob\'as,
\newblock{\bf Random graphs}, 2nd ed.,
\newblock{Cambridge Stud. Adv. Math. 73, Cambridge University Press}, 2001.

\bibitem{BBDK}
S.~Brandt, H.~Broersma, R.~Diestel and M.~Kriesell,
\newblock{Global connectivity and expansion: long cycles and factors in $f$-connected graphs},
\newblock{\em Combinatorica} {\bf 26} (2006), 17--36.

\bibitem{BFSU}
A.~Z.~Broder, A.~M.~Frieze, S.~Suen and E.~Upfal,
\newblock{An efficient algorithm for the vertex-disjoint paths problem in random graphs},
\newblock{Proceedings of SODA '96}, pp. 261--268.

\bibitem{CL}
F.~Chung and L.~Lu,
\newblock{The diameter of sparse random graphs},
\newblock{\em Advances in Applied Math.} {\bf 26} (2001), 257--279.

\bibitem{CFS14}
D.~Conlon, J.~Fox and B.~Sudakov,
\newblock{Cycle packing},
\newblock{\em Random Structures Algorithms} {\bf 45} (2014), 608--626.

\bibitem{E83}
P.~Erd\H{o}s,
\newblock{On some of my conjectures in number theory and combinatorics},
{Proceedings of the fourteenth Southeastern conference on combinatorics, graph theory and computing (Boca Raton, Fla., 1983),}
\newblock{\em Congr. Numer.} {\bf 39} (1983), 3--19.

\bibitem{EGP66}
P.~Erd\H{o}s, A.~W.~Goodman and L.~P\'osa,
\newblock{The representation of a graph by set intersections},
\newblock{\em Canad. J. Math.} {\bf 18} (1966), 106--112.

\bibitem{KKO}
F.~Knox, D.~K\"uhn and D.~Osthus,
\newblock{Edge-disjoint Hamilton cycles in random graphs},
\newblock{\em Random Structures Algorithms}, to appear.

\bibitem{P76}
L.~P\'osa,
\newblock{Hamiltonian circuits in random graphs},
\newblock{\em Discrete Math.} {\bf 14} (1976), 359--364.

\bibitem{P85}
L.~Pyber,
\newblock{An Erd\H{o}s-Gallai conjecture},
\newblock{\em Combinatorica } {\bf 5} (1985), 67--79.

\end{thebibliography}
\end{document}